\newtheorem{theorem}{Theorem}
\newtheorem{lemma}[theorem]{Lemma}
\newtheorem{corollary}[theorem]{Corollary}
\newtheorem{proposition}[theorem]{Proposition}
\newcommand{\fd}{\mathbb{F}}
\newcommand{\z}{\mathbb{Z}}
\newcommand{\dis}{\displaystyle}
\begin{document}
%
\title{  Codes Associated with  $O^+(2n,2^r)$ \\
          and Power Moments of Kloosterman Sums}
%
%
\author{Dae San Kim,~\IEEEmembership{Member,~IEEE}
\thanks{ The author is with the Department of Mathematics, Sogang University, Seoul 121-742, Korea(e-mail; dskim@sogang.ac.kr).  }
        }
\maketitle

\begin{abstract}
In this paper, we construct three binary linear codes
$C(SO^+(2,q))$, $C(O^+(2,q))$, $C(SO^+(4,q))$, respectively
associated with the orthogonal groups $SO^+(2,q)$, $O^+(2,q)$,
$SO^+(4,q)$, with $q$ powers of two. Then we obtain recursive
formulas for the power moments of Kloosterman and 2-dimensional
Kloosterman sums in terms of the frequencies of weights in the
codes. This is done via Pless power moment identity and by
utilizing the explicit expressions of Gauss sums for the
orthogonal groups. We emphasize that, when the recursive formulas
for the power moments of Kloosterman sums are compared, the
present one is computationally more effective than the previous
one constructed from the special linear group $SL(2,q)$. We
illustrate our results with some examples.
\end{abstract}

\begin{keywords}
Kloosterman sum, 2-dimensional Kloosterman sum, orthogonal group,
Pless power moment identity, weight distribution, Gauss sum.
\end{keywords}


%
\IEEEpeerreviewmaketitle

\section{Introduction}

 Let  $\psi$ be a nontrivial additive character of the finite field $\fd_q$
 with $q = p^r$ elements ( $p$ a prime), and let $m$ be a positive integer.
 Then the $m$-dimensional  Kloosterman sum $K_m(\psi;a)$(\cite{RH}) is defined by

\begin{multline*}
 K_{m}(\psi;a)=\sum_{\alpha_{1},\cdots,\alpha_{m} \in
\fd_{q}^{*}}\psi(\alpha_{1}+\cdots+\alpha_{m}+a\alpha_{1}^{-1}\cdots\alpha_{m}^{-1})\\
(a \in \fd_{q}^{*}).
\end{multline*}

In particular, if $m=1$, then $K_{1}(\psi;a)$ is simply denoted by
$K(\psi;a)$, and is called the Kloosterman sum. The Kloosterman
sum was introduced in 1926 (\cite{HDK}) to give an estimate for
the Fourier coefficients of modular forms.

For each nonnegative integer $h$, by $MK_{m}(\psi)^{h}$ we will
denote the $h$-th moment of the $m$-dimensional Kloosterman sum
$K_{m}(\psi;a)$. Namely, it is given by
\begin{equation*}
 MK_{m}(\psi)^{h}=\sum_{a \in \fd_{q}^{*}}K_{m}(\psi;a)^{h}.
 \end{equation*}

If $\psi=\lambda$ is the canonical additive character of
$\mathbb{F}_{q}$, then $MK_{m}(\lambda)^{h}$ will be simply denoted
by $MK_{m}^{h}$. If further $m=1$, for brevity $MK_{1}^{h}$ will be
indicated by $MK^{h}$. The power moments of Kloosterman sums can be
used, for example, to give an estimate for the Kloosterman sums and
have also been studied to solve a variety of problems in coding
theory over finite fields of characteristic two.

From now on, let us assume that $q=2^{r}$. Carlitz \cite{L1}
evaluated $MK^{h}$, for $h\leq 4$, while Moisio \cite{M4} computed
it for $h=6$. Recently, Moisio was able to find explicit
expressions of $MK^{h}$, for the other values of $h$ for $h \leq
10$ (cf. \cite{M1})(Similar results exist also over the finite
fields of characteristic three (cf. \cite{GR},\cite{M2})). This
was done, via Pless power moment identity, by connecting moments
of Kloosterman sums and the frequencies of weights in the binary
Zetterberg code of length $q+1$, which were known by the work of
Schoof and Vlugt in \cite{RM}.  In \cite{DS2}, the binary linear
codes $C(SL(n,q))$ associated with finite special linear groups
$SL(n,q)$ were constructed when $n,q$ are both powers of two. Then
obtained was a recursive formula for the power moments of
multi-dimensional Kloosterman sums in terms of the frequencies of
weights in $C(SL(n,q))$. This was done via Pless power moment
identity and by utilizing our previous result on the explicit
expression of the Gauss sum for $SL(n,q)$. In particular, when
$n=2$, this gives a recursive formula for the power moments of
Kloosterman sums.

In this paper, we will show the following theorem giving recursive
formulas for the power moments of Kloosterman and 2-dimensional
Kloosterman sums. To do that, we construct three binary linear
codes  $C(SO^+(2,q))$, $C(O^+(2,q))$, $C(SO^+(4,q))$, respectively
associated with $SO^+(2,q)$, $O^+(2,q)$, $SO^+(4,q)$,~and express
those power moments in terms of the frequencies of weights in each
code. Then, thanks to our previous results on the explicit
expressions of ``Gauss sums" for the orthogonal group $O^+(2n,q)$
and the special orthogonal group $SO^+(2n,q)$ \cite{DS3}, we can
express the weight of each codeword in the duals of the codes in
terms of Kloosterman or 2-dimensional Kloosterman sums. Then our
formulas will follow immediately from the Pless power moment
identity.

 The recursive formula for power moments of Kloosterman sums in this paper (cf. (1), (2))
 is computationally more effective than that in \cite{DS2}(cf. \cite{DS2}, (3)).
 This is because it is easier to compute the weight distribution of $C(SO^+(2,q))$ than that of  $C(SL(2,q))$.
 Theorem 1 in the following is the main result of this paper.

\begin{theorem}
Let $q=2^{r}$. Then we have the following.

(a) For $r \geq 3$, and $h = 1,2,\ldots,$
\begin{align}
 \begin{split}
     MK^h &= \sum_{l=0}^{h-1}(-1)^{h+l+1}{h \choose l}(q-1)^{h-l} MK^l\\
        &+q\sum_{j=0}^{min\{N_1,h\}}(-1)^{h+j}C_{1,j}\sum_{t=j}^{h}t!S(h,t)2^{h-t}{N_1-j \choose
        N_1-t},
  \end{split}
 \end{align}
where $N_1 = \mid SO^+(2,q) \mid = q-1$, and
$\{C_{1,j}\}_{j=0}^{N_1}$ is the weight distribution of
$C(SO^+(2,q))$ given by
\begin{equation}
C_{1,j} = \sum {1 \choose \nu_0} \prod_{tr(\beta^{-1}) = 0} {2
\choose \nu_\beta} (j = 0, \ldots, N_1).
\end{equation}
Here the sum is over all the sets of nonnegative integers
$\{\nu_0\} \bigcup \{\nu_\beta\}_{tr(\beta^{-1}) = 0}$ satisfying
$\nu_0 + \dis\sum_{tr(\beta^{-1}) = 0} \nu_\beta = j$ and
$\dis\sum_{tr(\beta^{-1}) = 0} \nu_\beta \beta = 0$. In addition,
$S(h,t)$ is the Stirling number of the second kind defined by
\begin{equation}
S(h,t)=\frac{1}{t!}\sum_{j=0}^{t}(-1)^{t-j}{\binom{t}{j}}j ^{h} .
\end{equation}

(b) For $r \geq 3$, and $h = 1,2,\ldots,$

 \begin{multline}
     MK^h = \sum_{l=0}^{h-1}(-1)^{h+l+1}{h \choose l}(q-1)^{h-l} MK^l\\
        +q\sum_{j=0}^{min\{N_2,h\}}(-1)^{h+j}C_{2,j}\sum_{t=j}^{h}t!S(h,t)2^{h-t}{N_2-j \choose
        N_2-t},
  \end{multline}
 where $N_2 = \mid O^+(2,q) \mid = 2(q-1)$, and
$\{C_{2,j}\}_{j=0}^{N_2}$ is the weight distribution of
$C(O^+(2,q)$ given by
\begin{equation}
C_{2,j} = \sum {q \choose \nu_0} \prod_{tr(\beta^{-1}) = 0} {2
\choose \nu_\beta} (j = 0, \ldots, N_2).
\end{equation}
Here the sum is over all the sets of nonnegative integers
$\{\nu_0\} \bigcup \{\nu_\beta\}_{tr(\beta^{-1}) = 0}$ satisfying
$\nu_0 + \dis\sum_{tr(\beta^{-1}) = 0} \nu_\beta = j$ and
$\dis\sum_{tr(\beta^{-1}) = 0} \nu_\beta \beta = 0$.\\

(c) For $r \geq 2$, and $h = 1,2,\ldots,$

 \begin{multline}
     MK_2^h = \sum_{l=0}^{h-1}(-1)^{h+l+1}{h \choose l}(q^4 - q^3 - 2q^2 + 1)^{h-l} MK_2^l\\
        +q^{1-2h}\sum_{j=0}^{min\{N_3,h\}}(-1)^{h+j}C_{3,j}\sum_{t=j}^{h}t!S(h,t)2^{h-t}{N_3-j \choose
        N_3-t},
  \end{multline}

 \begin{multline}
     MK^{2h} = \sum_{l=0}^{h-1}(-1)^{h+l+1}{h \choose l}(q^4 - q^3 - 2q^2 + q + 1)^{h-l} MK^{2l}\\
             +q^{1-2h}\sum_{j=0}^{min\{N_3,h\}}(-1)^{h+j}C_{3,j}\sum_{t=j}^{h}t!S(h,t)2^{h-t}{N_3-j \choose
        N_3-t},
  \end{multline}
 where $N_3 = \mid SO^+(4,q) \mid = q^2(q^2-1)^2$, and
$\{C_{3,j}\}_{j=0}^{N_3}$ is the weight distribution of
$C(SO^+(4,q))$ given by
\begin{equation}
C_{3,j} = \sum {m_0 \choose \nu_0} \prod_{\substack{\mid t \mid <
2\sqrt{q}\\t\equiv-1(4)}} \prod_{K(\lambda;\beta^{-1})=t} {m_t
\choose \nu_\beta} (j = 0,\ldots,N_3).
\end{equation}
Here the sum is over all the sets of nonnegative integers
$\{\nu_\beta\}_{\beta \in \fd_q}$  satisfying $\dis \sum_{\beta
\in \fd_q} \nu_\beta = j$ and $\dis\sum_{\beta \in \fd_q}
\nu_\beta \beta = 0$,
\begin{center}
$m_0 = q^3(2q^2-q-2), $
\end{center}
and
\begin{center}
 $m_t = q^2(q^3-q^2-2q+t)$,
\end{center}
for all integers $t$ satisfying $|t| < 2 \sqrt{q}$, and $t \equiv
-1 ~(mod ~4)$.
\end{theorem}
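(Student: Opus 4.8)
The plan is to derive all of parts (a), (b), (c) from a single mechanism: construct each code as the $\fd_2$-row space (or rather, the code whose codewords are indexed by group elements and whose coordinates are the ``evaluation'' coordinates) attached to the relevant orthogonal group, identify the Hamming weight of a generic codeword in the dual code with an explicit expression involving Kloosterman or $2$-dimensional Kloosterman sums via the Gauss-sum formulas of \cite{DS3}, and then feed this into the Pless power moment identity. First I would recall the Pless power moment identity: for a binary $[n,k]$ code $C$ with dual weight distribution $\{C_j^\perp\}$ (here the $C_{i,j}$ in the statement), for every $h\geq 1$,
\begin{equation*}
 \sum_{j=0}^{n} j^{h} A_j = \sum_{t=0}^{h} (-1)^{t} t! \, S(h,t)\, 2^{k-t}\sum_{j=0}^{n}\binom{n-j}{n-t}C_j^\perp ,
\end{equation*}
where $A_j$ is the weight distribution of $C$ itself. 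So the left-hand side must be computed independently: the weights of $C$ (not $C^\perp$) are read off from the explicit Gauss sums, and each $j^h$-weighted sum over codewords of $C$ will telescope into a linear combination of lower power moments $MK^l$ (resp. $MK_2^l$, $MK^{2l}$) plus the leading term. Matching the two sides, solving for $MK^h$, and absorbing the $2^{h-t}$ and the Stirling numbers produces exactly the displayed recursions (1)--(7).

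The technical core, carried out separately for each of the three groups, is: (i) write down $C(SO^+(2,q))$, $C(O^+(2,q))$, $C(SO^+(4,q))$ as codes of length $N_i=|SO^+(2,q)|,\,|O^+(2,q)|,\,|SO^+(4,q)|$ with a natural generating set indexed by $\fd_q$ (the coordinates corresponding to group elements grouped by an invariant such as $\mathrm{tr}(\beta^{-1})$ or $K(\lambda;\beta^{-1})$); (ii) apply the explicit formula for the Gauss sum $\sum_{g}\lambda(\mathrm{tr}\, wg)$ over the group from \cite{DS3} to show that the weight of the codeword of the dual attached to $w$ equals a fixed affine function of a single Kloosterman (or $2$-dimensional Kloosterman) sum evaluated at an argument determined by $w$; (iii) count, for each value $\beta$ of that argument, the multiplicity $m_t$ of coordinates with $K(\lambda;\beta^{-1})=t$ (respectively with $\mathrm{tr}(\beta^{-1})=0$), which yields the product formulas (2), (5), (8) for $C_{i,j}$ — these are the ``how many codewords of weight $j$'' counts obtained by MacWilliams-type bookkeeping: choosing $\nu_\beta$ coordinates from each fiber of size $m_t$ subject to $\sum\nu_\beta=j$ and the single linear relation $\sum\nu_\beta\beta=0$ that cuts the ambient space down to the code. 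The constants $q-1$, $2(q-1)$, $q^4-q^3-2q^2+1$, $q^4-q^3-2q^2+q+1$ appearing as the base of the $(\,\cdot\,)^{h-l}$ term are precisely $|SO^+(2,q)|$, $|O^+(2,q)|$, and the two relevant averages over $SO^+(4,q)$ of the squared/paired Kloosterman values, and will emerge from evaluating the trivial ($w=0$) contribution and the first power moment.

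The step I expect to be the main obstacle is (ii)–(iii) for $SO^+(4,q)$: the Gauss sum for $SO^+(4,q)$ from \cite{DS3} does not reduce to a single one-dimensional Kloosterman sum but to a $2$-dimensional Kloosterman sum $K_2(\lambda;\beta^{-1})$ — or, equivalently via the Katz–Salié / recursive relations, to a quadratic expression in ordinary Kloosterman sums — which is why part (c) produces two recursions, one for $MK_2^h$ (formula (6)) and one for $MK^{2h}$ (formula (7)), and why the multiplicities $m_t = q^2(q^3-q^2-2q+t)$ are indexed by the admissible Kloosterman values $t$ with $|t|<2\sqrt q$, $t\equiv -1\ (\mathrm{mod}\ 4)$ (the Salié congruence in characteristic two). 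Getting the multiplicities $m_0=q^3(2q^2-q-2)$ and $m_t$ exactly right requires carefully counting the number of $\beta\in\fd_q^*$ with a prescribed Kloosterman value and then weighting by the number of group elements mapping to each such $\beta$ under the relevant invariant — this bookkeeping, together with keeping the $q^{1-2h}$ normalization consistent with the length and dimension of $C(SO^+(4,q))$, is where all the arithmetic has to be checked. Parts (a) and (b) are the degenerate ($n=1$) instances of the same argument and should go through with the one-dimensional Gauss sum formula; the only subtlety there is the distinction between $SO^+(2,q)$ and $O^+(2,q)$, which changes only the ``$\nu_0$'' fiber size from $1$ to $q$ and hence the base constant from $q-1$ to $2(q-1)$.
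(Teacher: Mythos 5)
Your overall strategy coincides with the paper's: define $C(G_i(q))$ as the binary code orthogonal to $v_i=(Tr\,g_1,\ldots,Tr\,g_{N_i})$, use Delsarte's theorem to write every codeword of $C(G_i(q))^\perp$ as $c(a)=(tr(aTr\,g_1),\ldots,tr(aTr\,g_{N_i}))$, compute $w(c(a))=\tfrac12\bigl(N_i-\sum_{w\in G_i(q)}\lambda(aTr\,w)\bigr)$ from the explicit Gauss sums of \cite{DS3}, obtain $C_{i,j}$ by fiberwise counting over $n_i(\beta)=|\{w\in G_i(q):Tr\,w=\beta\}|$ (with the Lachaud--Wolfmann description of the range of $K(\lambda;\cdot)$ supplying the fibers $m_t$ in case (c)), and apply the Pless identity to $C(G_i(q))^\perp$, separating the $l=h$ term. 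That is the right machine. But two concrete points in your outline are wrong or missing, and following it literally would produce an incorrect part (b).

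First, the bases of the $(\cdot)^{h-l}$ terms are \emph{not} the group orders: formula (4) has base $(q-1)^{h-l}$, the same as (1), not $2(q-1)=|O^+(2,q)|$ as you claim. The base is $N_i$ minus the constant part of the Gauss sum (suitably normalized): since $\sum_{w\in O^+(2,q)}\lambda(aTr\,w)=K(\lambda;a)+q-1$, one gets $w(c(a))=\tfrac12\bigl(2(q-1)-(q-1)-K(\lambda;a)\bigr)=\tfrac12\bigl(q-1-K(\lambda;a)\bigr)$, identical to the $SO^+(2,q)$ case; passing from (a) to (b) changes only the length $N_2$ and the $\binom{q}{\nu_0}$ fiber, not the base. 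Second, you never address the injectivity of $a\mapsto c(a)$, which is not cosmetic: it is needed to know $\dim_{\fd_2}C(G_i(q))^\perp=r$ (whence the factor $q=2^r$ in front of the second sum, via $2^{k-t}$ in the Pless identity) and to rewrite the left-hand side $\sum_j j^hB_j$ as $\sum_{a\in\fd_q^*}w(c(a))^h$ without multiplicity. For $i=1,2$ this is precisely where the hypothesis $r\ge3$ enters: a nonzero kernel element $a$ would force $tr(a\beta)=0$ for all $\beta$ with $tr(\beta^{-1})=0$, hence $K(\lambda;a)=q-1$ by the identity $\sum_{\alpha\ne0,1}\lambda\bigl(a/(\alpha^2+\alpha)\bigr)=K(\lambda;a)-1$, contradicting the Weil bound $|K(\lambda;a)|\le2\sqrt q$ once $q\ge8$. (Also, the Pless identity you quote carries $(-1)^t$ where the correct form attaches $(-1)^j$ to the dual frequency $C_j^\perp$; since $\binom{n-j}{n-t}$ does not force $j=t$, this is not equivalent and must be corrected to recover the signs $(-1)^{h+j}$ in (1), (4), (6), (7).)
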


\section{$O^+(2n,q)$}

 For more details about the results of this section, one is referred to the paper \cite{DS3}.
 In addition, \cite{Wan} is an excellent reference for matrix groups over finite fields.

 Throughout this paper, the following notations will be used:\\
\begin{itemize}
 \item [] $q = 2^r$ ($r \in \z_{>0}$),\\
 \item [] $\fd_q$ = the finite field with $q$ elements,\\
 \item [] $Tr A$ = the trace of $A$ for a square matrix $A$,\\
 \item [] $^tB$ = the transpose of $B$ for any matrix $B$.
\end{itemize}\

 Let $\theta^+$ be the nondegenerate quadratic form on the vector
 space $\fd_q^{2n \times 1}$ of all $2n \times 1$ column vectors over $\fd_q$, given by
 \[
\theta^+(\sum_{i=1}^{2n} x_i e^i) = \sum_{i=1}^n x_i x_{n+i},
 \] where\\
 \{$e^1=^t[10\ldots0], e^2=^t[01\ldots0],\ldots,e^{2n}=^t[0\ldots01]$\} is the standard basis of $\fd_q^{2n \times 1}$.

 The group $O^+(2n,q)$ of all isometries of ($\fd_q^{2n \times 1}$,
 $\theta^+ $) is  given by :
 \begin{align*}
O^+(2n,q) &=
\left\{\left[%
\begin{array}{cc}
  A & B \\
  C & D \\
\end{array}%
\right] \in GL(2n,q) \Big | \substack{
^tAC, ^tBD \text{~are alternating}\\
^tAD + ^tCB = 1_n} \right\}\\
 &= \left\{\left[%
\begin{array}{cc}
  A & B \\
  C & D \\
\end{array}%
\right] \in GL(2n,q) \Big | \substack{
^tAB, ^tCD \text{~are alternating}\\
A ^tD + B ^tC = 1_n} \right\},
\end{align*}where $A,B,C,D$ are of size $n$.

An $n\times n$ matrix $A=(a_{ij})$ over $\fd_q$ is called
alternating if
\[
\left\{%
\begin{array}{ll}
    a_{ii} = 0, & \hbox{for $1 \leq i \leq n$,} \\
    a_{ij} = -a_{ji} = a_{ji} , & \hbox{for $1 \leq i < j \leq n$.} \\
\end{array}%
\right.
\]

$P^+ = P^+(2n,q)$  is the maximal parabolic subgroup of
$O^+(2n,q)$ defined by:
\[
P^+(2n,q) = \left\{ \left[%
\begin{array}{cc}
  A & 0 \\
  0 & ^tA^{-1} \\
\end{array}%
\right] \left[%
\begin{array}{cc}
  1_n & B \\
  0 & 1_n \\
\end{array}%
\right] \Big| \substack{A \in GL(n,q)\\B \text{~alternating}}
\right\}.
\]
Then, with respect to $P^+ = P^+(2n,q)$, the Bruhat decomposition
of $O^+(2n,q)$ is given by:
\begin{equation}
O^+(2n,q) = \coprod_{r=0}^n P^+ \sigma_r^+ P^+,
\end{equation}where
\[
\sigma_r^+ = \left[%
\begin{array}{cccc}
   0 & 0 & 1_r & 0 \\
  0 & 1_{n-r} & 0 & 0 \\
  1_r & 0 & 0 & 0 \\
  0 & 0 & 0 & 1_{n-r} \\
\end{array}%
\right] \in O^+(2n,q).
\]

Put, for $0\leq r \leq n$,
\[
A^+_r = \{ w \in P^+(2n,q) \mid \sigma^+_rw(\sigma_r^+)^{-1} \in
P^+(2n,q) \}.
\]
Expressing $O^+(2n,q)$ as a disjoint union of right cosets of $P^+
= P^+(2n,q)$,  the Bruhat decomposition in (9) can be written as
\begin{equation}
O^+(2n,q) = \coprod_{r=0}^n P^+ \sigma_r^+(A_r^+\backslash P^+).
\end{equation}
The order of the general linear group  $GL(n,q)$ is given by
\[
g_n = \prod_{j=0}^{n-1}(q^n - q^j) = q^{{n \choose 2}}
\prod_{j=1}^n(q^j - 1).
\]

 For integers $n,r$  with $0 \leq r \leq n$ , the $q$-binomial coefficients are defined as:
 \begin{equation}
\left[ \substack{n \\ r}
 \right]_q = \prod_{j=0}^{r-1} (q^{n-j} - 1)/(q^{r-j} - 1).
 \end{equation}

Then, for integers $n,r$  with $0 \leq r \leq n$, we have
\begin{equation}
\frac{g_n}{g_{n-r} g_r} = q^{r(n-r)}\left[ \substack{n \\ r}
 \right]_q.
\end{equation}

As it is shown in \cite{DS3},
\begin{equation}
\mid A^+_r \mid = g_r g_{n-r} q^{{n \choose 2}}q^{r(2n-3r+1)/2}.
\end{equation}

Also, it is immediate to see that
\begin{equation}
\mid P^+(2n,q) \mid = q^{{n \choose 2}} g_n.
\end{equation}

Thus we get, from (12)-(14),
\begin{equation*}
\mid A^+_r\backslash P^+(2n,q) \mid = \left[ \substack{n \\ r}
 \right]_q q^{{r \choose 2}},
\end{equation*}
and \begin{equation}
 \mid P^+(2n,q)\mid ^2 \mid A_r^+ \mid^{-1} =
q^{{n \choose 2}} g_n \left[ \substack{n \\ r}
 \right]_q q^{{r \choose 2}}.
\end{equation}
 So, from (10), (15), we get:
\begin{align}
\begin{split}
\mid O^+(2n,q) \mid &= \sum_{r=0}^n \mid P^+(2n,q)\mid^2 \mid
A_r^+ \mid ^{-1}\\
                    &= 2q^{n^2-n}(q^n - 1) \prod_{j=1}^{n-1}
                    (q^{2j} - 1),
\end{split}
\end{align}
where one can apply the following $q$-binomial theorem with
$x=-1$.
\[
\sum_{r=0}^n \left[ \substack{n \\ r}
 \right]_q (-1)^r q^{{r \choose 2}} x^r = (x;q)_n,
\]with $(x;q)_n = (1-x)(1-qx)\cdots(1-q^{n-1}x)$

( $x$ an indeterminate, $n$ a positive integer).

  There is an epimorphism of groups $\delta^+ : O^+(2n,q) \rightarrow \fd_2^+$($\fd_2^+$ denoting the additive group of $\fd_2$)
  , which is related to the Clifford algebra $C(\fd_q^{2n\times 1},\theta^+)$ of the quadratic space $(\fd_q^{2n\times 1},\theta^+)$
  , and is given by

  \[
\delta^+(w) = Tr(B~^tC),
  \]
  where
  \[
w = \left [%
\begin{array}{cc}
  A & B \\
  C & D \\
\end{array}%
\right] \in O^+(2n,q).
  \]
Then $SO^+(2n,q) := Ker ~\delta^+$ is given by
\begin{equation}
SO^+ (2n,q) = \coprod_{0 \leq r \leq n, ~r ~even} P^+
\sigma_r^+(A_r^+ \backslash P^+),
\end{equation}
and
\[
\mid SO^+(2n,q) \mid ~= q^{n^2-n}(q^n - 1) \prod_{j=1}^{n-1}
(q^{2j} -1)  (cf.~ (16)).
\]

\section{Gauss sums for $O^+(2n,q)$}

 The following notations will be used throughout this paper.

\begin{gather*}
tr(x)=x+x^2+\cdots+x^{2^{r-1}} \text{the trace function} ~\fd_q
\rightarrow \fd_2,\\
\lambda(x) = (-1)^{tr(x)} ~\text{the canonical additive character
of} ~\fd_q.
\end{gather*}
Then any nontrivial additive character $\psi$ of $\fd_q$ is given
by $\psi(x) = \lambda(ax)$ , for a unique $a \in \fd_q^*$.

For any nontrivial additive character $\psi$ of $\fd_q$ and $a \in
\fd_q^*$, the Kloosterman sum $K_{GL(t,q)}(\psi ; a)$ for
$GL(t,q)$ is defined as
\begin{equation}
K_{GL(t,q)}(\psi ; a) = \sum_{w \in GL(t,q)} \psi(Trw +
a~Trw^{-1}).
\end{equation}
Observe that, for $t=1$,~$ K_{GL(1,q)}(\psi ; a)$ denotes the
Kloosterman sum $K(\psi ; a)$.

For the Kloosterman sum $K(\psi ; a)$, we have the Weil bound (cf.
\cite{RH})
\begin{equation}
\mid K(\psi ; a) \mid \leq 2\sqrt{q}.
\end{equation}

  In \cite{DS1}, it is shown that $K_{GL(t,q)}(\psi ; a)$ ~satisfies the following recursive relation:
  for integers $t \geq 2$, ~$a \in \fd_q^*$ ,
\begin{multline}
K_{GL(t,q)}(\psi ; a) = q^{t-1}K_{GL(t-1,q)}(\psi ; a)K(\psi
;a)\\
+ q^{2t-2}(q^{t-1}-1)K_{GL(t-2,q)}(\psi ; a),
\end{multline}
where we understand that $K_{GL(0,q)}(\psi ; a)=1$ . From (20), in
\cite{DS1} an explicit expression of the Kloosterman sum for
$GL(t,q)$ was derived.\\

\begin{theorem}[\cite{DS1}] For integers $t \geq 1$, and $a \in \fd_q^*$, the
Kloosterman sum $K_{GL(t,q)}(\psi ; a)$ is given by

\begin{multline}
K_{GL(t,q)}(\psi ; a)=q^{(t-2)(t+1)/2} \sum_{l=1}^{[(t+2)/2]} q^l
K(\psi;a)^{t+2-2l}\\
\times \sum \prod_{\nu=1}^{l-1} (q^{j_\nu -2\nu}-1),
\end{multline}
where  $K(\psi;a)$ is the Kloosterman sum and the inner sum is
over all integers $j_1,\ldots,j_{l-1}$ satisfying $2l-1 \leq
j_{l-1} \leq j_{l-2} \leq \cdots \leq j_1 \leq t+1$. Here we agree
that the inner sum is $1$ for $l=1$.
\end{theorem}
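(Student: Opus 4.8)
\emph{Plan of proof.} I would prove this by induction on $t$, using the recursion (20) together with the convention $K_{GL(0,q)}(\psi;a)=1$. The base cases $t=1$ and $t=2$ are routine. For $t=1$ the asserted formula has prefactor $q^{(1-2)(1+1)/2}=q^{-1}$ and its only summand ($l=1$) is $q\,K(\psi;a)$ with empty inner product $1$, giving $K(\psi;a)=K_{GL(1,q)}(\psi;a)$. For $t=2$ the prefactor is $q^{0}$, the summand $l=1$ equals $q\,K(\psi;a)^{2}$, and the summand $l=2$ forces $j_{1}=3$ and contributes $q^{2}(q^{3-2}-1)=q^{2}(q-1)$, so the total is $q\,K(\psi;a)^{2}+q^{2}(q-1)$, which is exactly what (20) yields at $t=2$ together with $K_{GL(1,q)}(\psi;a)=K(\psi;a)$ and $K_{GL(0,q)}(\psi;a)=1$.

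\emph{Inductive step; reduction to a combinatorial recursion.} Fix $t\geq 3$. I would first abbreviate the inner sum: for an integer $N$ and $l\geq 1$, let $S_{l}(N)=\sum\prod_{\nu=1}^{l-1}(q^{j_{\nu}-2\nu}-1)$, the sum over integers $j_{1},\ldots,j_{l-1}$ with $2l-1\leq j_{l-1}\leq\cdots\leq j_{1}\leq N$, so $S_{1}(N)=1$, $S_{l}(N)=0$ for $N<2l-1$, and (by convention) $S_{0}(N)=0$. Then the assertion for $t$ reads $K_{GL(t,q)}(\psi;a)=q^{(t-2)(t+1)/2}\sum_{l\geq 1}q^{l}K(\psi;a)^{t+2-2l}S_{l}(t+1)$, while the inductive hypotheses read $K_{GL(t-1,q)}(\psi;a)=q^{(t-3)t/2}\sum_{l\geq 1}q^{l}K(\psi;a)^{t+1-2l}S_{l}(t)$ and $K_{GL(t-2,q)}(\psi;a)=q^{(t-4)(t-1)/2}\sum_{l\geq 1}q^{l}K(\psi;a)^{t-2l}S_{l}(t-1)$. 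Substituting these two into (20), using the exponent identities $q^{t-1}q^{(t-3)t/2}=q^{(t-2)(t+1)/2}$ and $q^{2t-2}q^{(t-4)(t-1)/2}=q\cdot q^{(t-2)(t+1)/2}$, and shifting the summation index by one in the $K_{GL(t-2,q)}$ contribution so that the surplus factor $q$ is absorbed and the powers of $K(\psi;a)$ line up as $t+2-2l$, I would compare the coefficient of $q^{l}K(\psi;a)^{t+2-2l}$ on the two sides. The mismatch among the floor cut-offs $[(t+2)/2]$, $[(t+1)/2]$, $[t/2]$ for the range of $l$ is absorbed by $S_{l}(N)=0$ for $N<2l-1$ (for the surplus end term when $t$ is even) and by $S_{0}=0$ (for the term $l=1$). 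What then remains to be proved is the single identity
\[
S_{l}(t+1)=S_{l}(t)+(q^{t-1}-1)\,S_{l-1}(t-1)\qquad(l\geq 1).
\]

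\emph{The combinatorial identity.} I would prove the more symmetric statement $S_{l}(N)=S_{l}(N-1)+(q^{N-2}-1)S_{l-1}(N-2)$ and then set $N=t+1$. Split the defining sum of $S_{l}(N)$ according to the value of the largest index $j_{1}$: configurations with $j_{1}\leq N-1$ contribute exactly $S_{l}(N-1)$; configurations with $j_{1}=N$ factor out $q^{j_{1}-2}-1=q^{N-2}-1$, and after the substitution $k_{\nu}=j_{\nu+1}-2$ ($\nu=1,\ldots,l-2$) the remaining product becomes $\prod_{\nu=1}^{l-2}(q^{k_{\nu}-2\nu}-1)$ summed over $2(l-1)-1\leq k_{l-2}\leq\cdots\leq k_{1}\leq N-2$, which is precisely $S_{l-1}(N-2)$. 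This closes the induction.

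\emph{Where the difficulty lies.} I do not anticipate any genuine obstacle: the argument is a straightforward induction, and essentially all the effort is bookkeeping — keeping the three floor cut-offs for the range of $l$ mutually consistent (and observing that the extra end term vanishes because $S_{l}(N)=0$ for $N<2l-1$), verifying the two exponent computations, and carrying out the index shift cleanly. The only point worth isolating, and really the whole content of the step, is that the factor $q^{t-1}-1$ produced by (20) is exactly the factor $q^{j_{1}-2}-1$ attached to the new maximal index $j_{1}=t+1$ in $S_{l}(t+1)$, which is what the recursion for $S_{l}$ records.
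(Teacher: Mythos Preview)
Your argument is correct and is precisely the route the paper indicates: the theorem is quoted from \cite{DS1}, and the paper only remarks that the formula is derived there ``from (20)'', i.e., by induction on $t$ using the two-term recursion together with $K_{GL(0,q)}(\psi;a)=1$. Your bookkeeping---the exponent identities, the index shift $l\mapsto l-1$ in the $K_{GL(t-2,q)}$ contribution, and the key combinatorial recursion $S_{l}(N)=S_{l}(N-1)+(q^{N-2}-1)S_{l-1}(N-2)$ obtained by splitting on $j_{1}$---is exactly what that derivation requires, and the handling of the endpoints via $S_{l}(N)=0$ for $N<2l-1$ and $S_{0}=0$ is clean.
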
\

In Section 6 of \cite{DS3}, it is shown that the Gauss sums for
$O^+(2n,q)$ and $SO^+(2n,q)$ are respectively given by:

\begin{align}
\begin{split}
\sum_{w \in O^+(2n,q)} \psi(Tr w) &= \sum_{r=0}^n |A_r^+
\backslash P^+| \sum_{w \in P^+} \psi(Tr~ w\sigma_r^+)\\
 &= q^{{n \choose 2}} \sum_{r=0}^n \left[ \substack{n \\ r}
 \right]_q q^{(2rn-r^2-r)/2}s_r \\
 & \quad \times K_{GL(n-r,q)}(\psi;1),
\end{split}
\end{align}

\begin{align}
\begin{split}
\sum_{w \in SO^+(2n,q)} \psi(Tr w) &= \sum_{\substack{0\leq r \leq
n,\\r ~even}}|A_r^+
\backslash P^+| \sum_{w \in P^+} \psi(Tr ~w\sigma_r^+)\\
 &= q^{{n \choose 2}} \sum_{\substack{0\leq r \leq n,\\r ~even}} \left[ \substack{n \\ r}
 \right]_q q^{(2rn-r^2-r)/2}s_r \\
 & \quad \times K_{GL(n-r,q)}(\psi;1)
\end{split}
\end{align}
(cf. (10),(17)). Here $\psi$ is any nontrivial additive character
of $\fd_q$, $s_0 =1$, and, for $r \in \z_{> 0}$, $s_r$ denotes the
number of all $r \times r$ nonsingular symmetric matrices over
$\fd_q$ , which is given by
\begin{equation}
s_r = \left\{%
\begin{array}{ll}
    q^{r(r+2)/4}\prod_{j=1}^{r/2}(q^{2j-1}-1), & \hbox{for $r$ even,} \\
    q^{(r^2-1)/4}\prod_{j=1}^{(r+1)/2}(q^{2j-1}-1), & \hbox{for $r$ odd,} \\
\end{array}%
\right.
\end{equation}
(cf. Proposition 4.3 in \cite{DS3}).

For our purposes, we only need the following three expressions of
the Gauss sums for~$SO^+(2,q),$ $O^+(2,q)$, and $SO^+(4,q)$.  So
we state them separately as a theorem (cf. (11), (21)--(24)).
Also, for the ease of notations, we introduce
\begin{equation}
G_1(q) = SO^+(2,q), G_2(q) = O^+(2,q), G_3(q) = SO^+(4,q).
\end{equation}

\begin{theorem}
 Let $\psi$ be any nontrivial additive character of $\fd_q$. Then we have
 \begin{align*}
& \sum_{w \in G_1(q)} \psi(Tr w) = K(\psi ; 1),\\
& \sum_{w \in G_2(q)} \psi(Tr w) =  K(\psi ; 1) + q - 1,\\
& \sum_{w \in G_3(q)} \psi(Tr w) = q^2(K(\psi ; 1)^2 +q^3-q).
\end{align*}
\end{theorem}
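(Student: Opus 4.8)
The plan is to derive the three Gauss‑sum formulas of Theorem~4 directly from the general expressions~(23) and~(24), by specializing $n=1$ (for $G_1,G_2$) and $n=2$ (for $G_3$) and simplifying the short sums that result. First I would handle $G_2(q)=O^+(2,q)$. Here $n=1$, so $q^{\binom{n}{2}}=1$, the $q$‑binomial $\left[\substack{1\\r}\right]_q$ equals $1$ for $r=0,1$, and the exponent $(2rn-r^2-r)/2$ vanishes for both values of $r$. With $s_0=1$ and $s_1=q^{0}\prod_{j=1}^{1}(q^{2j-1}-1)=q-1$ from~(25), and recalling $K_{GL(0,q)}(\psi;1)=1$ and $K_{GL(1,q)}(\psi;1)=K(\psi;1)$, formula~(23) collapses to $K(\psi;1)+(q-1)$, which is the claimed value for $G_2$.

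Next, for $G_1(q)=SO^+(2,q)$ I would use~(24) with $n=1$: the sum over $r$ even with $0\le r\le 1$ retains only $r=0$, giving the single term $q^{\binom{1}{2}}\left[\substack{1\\0}\right]_q q^{0}s_0 K_{GL(1,q)}(\psi;1)=K(\psi;1)$, which is exactly the asserted formula. For $G_3(q)=SO^+(4,q)$ I would again apply~(24), now with $n=2$; here $q^{\binom{2}{2}}=q$, and the sum over even $r$ with $0\le r\le 2$ has two terms, $r=0$ and $r=2$. For $r=0$ one gets the factor $\left[\substack{2\\0}\right]_q=1$, exponent $(0-0-0)/2=0$, $s_0=1$, and $K_{GL(2,q)}(\psi;1)$, while for $r=2$ one gets $\left[\substack{2\\2}\right]_q=1$, exponent $(8-4-2)/2=1$, and $s_2=q^{2}\prod_{j=1}^{1}(q^{2j-1}-1)=q^{2}(q-1)$, with $K_{GL(0,q)}(\psi;1)=1$. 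Thus~(24) yields $q\bigl(K_{GL(2,q)}(\psi;1)+q\cdot q^{2}(q-1)\bigr)$. To finish I would substitute the value of $K_{GL(2,q)}(\psi;1)$ obtained from the recursion~(20) at $t=2$ (equivalently from Theorem~2): $K_{GL(2,q)}(\psi;1)=qK(\psi;1)^2+q^2(q-1)$. Plugging this in gives $q\bigl(qK(\psi;1)^2+q^2(q-1)+q^3(q-1)\bigr)$; the two non‑Kloosterman terms combine as $q^2(q-1)+q^3(q-1)=q^2(q-1)(1+q)=q^2(q^2-1)=q^4-q^2$, so the whole expression becomes $q\bigl(qK(\psi;1)^2+q^4-q^2\bigr)=q^2\bigl(K(\psi;1)^2+q^3-q\bigr)$, matching the claim.

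The main obstacle is bookkeeping rather than conceptual: one must correctly track the various $q$‑power exponents coming from $q^{\binom{n}{2}}$, the $q$‑binomial coefficients, the factor $q^{(2rn-r^2-r)/2}$, and the formula~(25) for $s_r$, and then correctly invoke the explicit value of $K_{GL(2,q)}(\psi;1)$. Once these pieces are assembled, the algebraic simplifications in each of the three cases are short and mechanical, so no genuine difficulty remains.
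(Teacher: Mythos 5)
Your proposal is correct and follows essentially the same route the paper intends: Theorem 3 is stated there as a direct specialization of the general Gauss-sum formulas (22)--(23) together with (24) for $s_r$ and the recursion (20) for $K_{GL(2,q)}(\psi;1)$, which is exactly the computation you carry out, and all of your arithmetic (including $s_1=q-1$, $s_2=q^2(q-1)$, and $K_{GL(2,q)}(\psi;1)=qK(\psi;1)^2+q^2(q-1)$) checks out. The only blemish is that your equation references are shifted by one relative to the paper's numbering (the Gauss sums are (22)--(23) and the $s_r$ formula is (24), not (23)--(25)).
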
\

 For the following lemma, one notes that $(n,q-1) = 1$.\\

 \begin{lemma}
With $n=2^s (s \in \z_{\geq 0})$, the map $a \mapsto a^n : \fd_q^*
\rightarrow \fd_q^*$ is bijection.
 \end{lemma}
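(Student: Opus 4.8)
The claim is that, writing $n=2^s$, the power map $a\mapsto a^n$ is a bijection of $\fd_q^*$. Since $\fd_q^*$ is a finite set, it suffices to show this map is injective, or equivalently that it is surjective; I will argue via injectivity together with a counting of kernels of the associated group endomorphism.

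\emph{Step 1: Reduce to an endomorphism of a cyclic group.} The map $\varphi\colon a\mapsto a^n$ is a group homomorphism $\fd_q^*\to\fd_q^*$, because $\fd_q^*$ is abelian. Its kernel is $\{a\in\fd_q^*: a^n=1\}$, the set of $n$-th roots of unity in $\fd_q$. Thus $\varphi$ is bijective if and only if this kernel is trivial, i.e.\ $1$ is the only solution of $x^n=1$ in $\fd_q^*$.

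\emph{Step 2: Show the $n$-th roots of unity are trivial.} Here $q=2^r$, so $\fd_q$ has characteristic $2$, and $n=2^s$ is a power of $2$. Over a field of characteristic $2$ the polynomial $x^{2^s}-1$ factors as $(x-1)^{2^s}$, because the Frobenius-type identity $x^{2^s}-1^{2^s}=(x-1)^{2^s}$ holds (iterate the freshman's-dream identity $(u+v)^2=u^2+v^2$). Hence the only root of $x^n-1$ in any field of characteristic $2$ — in particular in $\fd_q$ — is $x=1$, so $\ker\varphi=\{1\}$.

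\emph{Step 3: Conclude.} By Step 1 and Step 2, $\varphi$ is an injective homomorphism from the finite set $\fd_q^*$ to itself, hence a bijection. (Equivalently: since $\gcd(n,q-1)=1$, as $q-1$ is odd and $n$ is a power of $2$ — this is the parenthetical remark preceding the lemma — one may also take $m$ with $mn\equiv 1\pmod{q-1}$ and check directly that $b\mapsto b^m$ inverts $\varphi$.) There is essentially no obstacle here; the only point requiring care is invoking the characteristic-$2$ factorization of $x^{2^s}-1$, or, in the alternative argument, correctly using Lagrange's theorem / the cyclicity of $\fd_q^*$ to justify that $\gcd(n,q-1)=1$ forces the power map to be invertible.
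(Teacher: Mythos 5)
Your proof is correct. Note that the paper gives no written proof of this lemma at all: it only prefaces the statement with the remark that $(n,q-1)=1$, i.e.\ exactly the argument you relegate to your parenthetical in Step 3 --- since $q-1$ is odd and $n=2^s$, the $n$-th power map on the cyclic group $\fd_q^*$ of order $q-1$ is invertible (with inverse $b\mapsto b^m$ for $mn\equiv 1\pmod{q-1}$). Your main line of argument is a mild variant: instead of invoking coprimality and the structure of $\fd_q^*$, you compute the kernel of the power endomorphism directly via the characteristic-$2$ factorization $x^{2^s}-1=(x-1)^{2^s}$, concluding injectivity and hence bijectivity by finiteness. Both routes are sound and essentially one line each; the Frobenius argument has the small advantage of not needing cyclicity of $\fd_q^*$ (only that it is a finite abelian group in characteristic $2$), while the gcd argument generalizes immediately to any exponent coprime to $q-1$. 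Either is an acceptable proof of the lemma.
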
\

 A result analogous to the following Corollary is also mentioned
 in \cite{M3}.

 \begin{corollary}
For $n=2^s(s \in \z_{\geq 0})$, and $\psi$  a nontrivial additive
character of $\fd_q$,
\[
K(\psi;a^n) = K(\psi;a).
\]
\end{corollary}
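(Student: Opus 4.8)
The plan is to obtain the identity from one bijective change of variable in the defining sum, exploiting that for $n=2^{s}$ the map $x\mapsto x^{n}$ is the $s$-th power of the Frobenius automorphism of $\fd_{q}$, hence additive and trace-preserving. It is cleanest to run the argument for $\psi=\lambda$, the canonical additive character (which is the case needed in the sequel). Starting from
\[
K(\psi;a^{n})=\sum_{\alpha\in\fd_{q}^{*}}\psi\bigl(\alpha+a^{n}\alpha^{-1}\bigr),
\]
I would substitute $\alpha=\beta^{n}$. By the preceding Lemma (bijectivity of $a\mapsto a^{n}$ on $\fd_{q}^{*}$) this leaves the range of summation unchanged, and the general term becomes $\psi(\beta^{n}+a^{n}\beta^{-n})$.

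Next, since $n$ is a power of the characteristic $2$, one has $(\beta+a\beta^{-1})^{n}=\beta^{n}+(a\beta^{-1})^{n}=\beta^{n}+a^{n}\beta^{-n}$, so
\[
K(\psi;a^{n})=\sum_{\beta\in\fd_{q}^{*}}\psi\bigl((\beta+a\beta^{-1})^{n}\bigr).
\]
Finally, $tr(y^{2})=tr(y)$ for every $y\in\fd_{q}$ (immediate from the definition of $tr$ together with $y^{2^{r}}=y$), so by iteration $tr(y^{n})=tr(y)$ and hence $\lambda(y^{n})=\lambda(y)$. Applying this with $y=\beta+a\beta^{-1}$ collapses the sum to $\sum_{\beta\in\fd_{q}^{*}}\psi(\beta+a\beta^{-1})=K(\psi;a)$, as claimed. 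Equivalently, one may establish only the case $n=2$ in this way and then induct on $s$, each step being legitimate because $a\mapsto a^{2}$ is bijective.

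The only step that requires care is the last one, $\psi(y^{n})=\psi(y)$: it rests on the Frobenius-invariance of the absolute trace, which is precisely the property singling out the canonical character $\lambda$ among the nontrivial additive characters --- for a general $\psi=\lambda(b\,\cdot\,)$ the substitution $\alpha=\beta^{n}$ would produce $\lambda(b^{1/n}y)$ in place of $\lambda(by)$, where $b^{1/n}$ denotes the $n$-th root of $b$ furnished by the Lemma. Everything else --- the change of variable and the identity $(x+y)^{2^{s}}=x^{2^{s}}+y^{2^{s}}$ --- is routine, and no ingredient beyond the preceding Lemma and the invariance of the trace is needed.
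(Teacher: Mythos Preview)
Your argument is step-for-step the paper's: substitute $\alpha\mapsto\alpha^{n}$ via the preceding lemma, use additivity of the Frobenius to write $\alpha^{n}+a^{n}\alpha^{-n}=(\alpha+a\alpha^{-1})^{n}$, and finish with $\psi(y^{n})=\psi(y)$, which the paper justifies by citing the trace identity $tr(y^{p})=tr(y)$ from Lidl--Niederreiter.

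Your caution in the last paragraph is not only warranted but decisive. The identity $\psi(y^{n})=\psi(y)$ genuinely singles out $\psi=\lambda$: for $\psi=\lambda(b\,\cdot\,)$ one has $K(\psi;a)=K(\lambda;b^{2}a)$, so $K(\psi;a^{2})=K(\lambda;b^{2}a^{2})=K(\lambda;ba)$, which equals $K(\psi;a)=K(\lambda;b^{2}a)$ only when $K(\lambda;\cdot)$ takes the same value at $ba$ and $b^{2}a$. A concrete failure already occurs at $q=4$, $n=2$, $b=a=\omega$ (a primitive element): there $K(\psi;\omega)=3$ while $K(\psi;\omega^{2})=-1$. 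Thus the corollary as stated for arbitrary $\psi$ is false; the paper's proof and yours both establish only the case $\psi=\lambda$, which is precisely the case used downstream (Corollary~7). You were right to restrict.
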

\begin{proof}
\begin{align*}
K(\psi;a^n) &= \sum_{\alpha \in \fd_q^*} \psi(\alpha + a^n
\alpha^{-1})\\
&= \sum_{\alpha \in \fd_q^*} \psi(\alpha^n + a^n
\alpha^{-n}) (\text{by Lemma 4})\\
&= \sum_{\alpha \in \fd_q^*} \psi((\alpha + a\alpha^{-1})^n)\\
&= \sum_{\alpha \in \fd_q^*} \psi(\alpha +a
\alpha^{-1})(\cite{RH},\text{Theorem 2.23(v)})\\
&= K(\psi;a).
\end{align*}
\end{proof}

 For the next corollary, we need a result of Carlitz.\\

 \begin{theorem}[\cite{L2}] For the canonical additive character $\lambda$ of $\fd_q$, and $a \in \fd_q^*$,
 \begin{equation}
K_2(\lambda;a) = K(\lambda;a)^2-q.
 \end{equation}
 \end{theorem}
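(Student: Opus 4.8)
The plan is to prove this identity by an elementary manipulation of additive‑character sums, using only that $q$ is a power of $2$ together with the evaluation $\sum_{b\in\fd_q^*}K(\lambda;b)=1$; no appeal to the Gauss sums for orthogonal groups is required. First I would square $K(\lambda;a)=\sum_{x\in\fd_q^*}\lambda(x+ax^{-1})$ and, in the resulting double sum over $x,y\in\fd_q^*$, substitute $y=xz$ with $z$ ranging over $\fd_q^*$. Since $1+z^{-1}=(1+z)z^{-1}$, this gives
\[
K(\lambda;a)^2=\sum_{z\in\fd_q^*}\sum_{x\in\fd_q^*}\lambda\bigl((1+z)(x+ax^{-1}z^{-1})\bigr).
\]
The term $z=1$ contributes $q-1$, since then $1+z=0$ in characteristic $2$; for $z\neq1$ the substitution $x\mapsto x/(1+z)$ turns the inner sum into $K\bigl(\lambda;a(1+z)^2z^{-1}\bigr)$, and here characteristic $2$ is crucial: $(1+z)^2z^{-1}=(1+z^2)z^{-1}=z+z^{-1}$. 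Hence
\[
K(\lambda;a)^2=(q-1)+\sum_{z\in\fd_q^*\setminus\{1\}}K\bigl(\lambda;a(z+z^{-1})\bigr).
\]

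Next I would analyze the map $z\mapsto z+z^{-1}$ on $\fd_q^*\setminus\{1\}$. For $z\neq1$ one has $z\neq z^{-1}$ in characteristic $2$, so $w:=z+z^{-1}$ lies in $\fd_q^*$, and the fibre over such a $w$ is the set of roots in $\fd_q$ of $z^2+wz+1=0$, a quadratic that has no repeated root unless $w=0$; thus the map is exactly two‑to‑one onto its image. Writing $z=wt$ reduces the solvability of $z^2+wz+1=0$ to that of $t^2+t=w^{-2}$, which holds precisely when $tr(w^{-2})=tr(w^{-1})=0$. Therefore the image is $\{w\in\fd_q^*:tr(w^{-1})=0\}$, and
\[
K(\lambda;a)^2=(q-1)+2\sum_{\substack{w\in\fd_q^*\\ tr(w^{-1})=0}}K(\lambda;aw).
\]

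Finally I would remove the trace constraint, using that $\tfrac12\bigl(1+\lambda(w^{-1})\bigr)$ equals $1$ when $tr(w^{-1})=0$ and $0$ otherwise, so that
\[
2\sum_{\substack{w\in\fd_q^*\\ tr(w^{-1})=0}}K(\lambda;aw)=\sum_{w\in\fd_q^*}K(\lambda;aw)+\sum_{w\in\fd_q^*}\lambda(w^{-1})K(\lambda;aw).
\]
The first sum equals $\sum_{b\in\fd_q^*}K(\lambda;b)=1$ by a one‑line orthogonality computation. For the second, the substitution $w\mapsto w^{-1}$ together with the identity $K_2(\lambda;a)=\sum_{x\in\fd_q^*}\lambda(x)K(\lambda;ax^{-1})$ — immediate from the definition of $K_2$ upon isolating one summation variable — identifies it with $K_2(\lambda;a)$. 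Combining the three displayed equalities yields $K(\lambda;a)^2=(q-1)+1+K_2(\lambda;a)=q+K_2(\lambda;a)$, which is the assertion.

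I expect the only genuinely delicate point to be the middle step: verifying that $z\mapsto z+z^{-1}$ is exactly two‑to‑one onto the trace‑zero set $\{w\in\fd_q^*:tr(w^{-1})=0\}$. This is also where one must require that $\lambda$ be the \emph{canonical} additive character: for a general nontrivial $\psi$, the function $\tfrac12(1+\psi(w^{-1}))$ is the indicator of a twisted subset and the clean form degrades; the rest is routine character‑sum bookkeeping. It is worth noting that the set $\{w:tr(w^{-1})=0\}$ arising here is exactly the index set $\{\beta:tr(\beta^{-1})=0\}$ controlling the weight distributions $C_{1,j}$ and $C_{2,j}$ in Theorem 1, and that $K_2=K^2-q$ is precisely what lets both the $2$‑dimensional power moments $MK_2^h$ and the even power moments $MK^{2h}$ be extracted from the single code $C(SO^+(4,q))$, as in the two displays of part (c).
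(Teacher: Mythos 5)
Your proof is correct, and it is genuinely different from what the paper does: the paper offers no proof at all of this identity, simply citing Carlitz's note \cite{L2} and using (26) as a black box (e.g.\ in (44) and in passing between (29) and (30)). I checked each of your steps: the substitution $y=xz$ and the factorization $x(1+z)+ax^{-1}(1+z^{-1})=(1+z)(x+ax^{-1}z^{-1})$ are valid; the $z=1$ term does contribute $q-1$ in characteristic $2$; the rescaling $x\mapsto x/(1+z)$ and the identity $(1+z)^2z^{-1}=z+z^{-1}$ are right; the map $z\mapsto z+z^{-1}$ is two-to-one from $\fd_q^*\setminus\{1\}$ onto $\{w\in\fd_q^*: tr(w^{-1})=0\}$ exactly as you argue via $t^2+t=w^{-2}$ and $tr(w^{-2})=tr(w^{-1})$; the orthogonality computation $\sum_{b\in\fd_q^*}K(\lambda;b)=1$ is correct; and the isolation of one variable in $K_2$ does give $K_2(\lambda;a)=\sum_{x\in\fd_q^*}\lambda(x)K(\lambda;ax^{-1})$, so the final tally $(q-1)+1+K_2(\lambda;a)$ is right. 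What your argument buys is a short, self-contained, purely character-theoretic proof of a fact the paper imports; it also makes visible why the trace-zero set $\{\beta: tr(\beta^{-1})=0\}$ that indexes $C_{1,j}$ and $C_{2,j}$ is intrinsic to the problem. One small remark: an alternative route already latent in the paper is to combine the two evaluations of $\sum_{a}\lambda(-\beta^{-1}a)K(\lambda;a)^2$ in (44)--(45) with Proposition~8 and Fourier inversion, which recovers (26) without your fibre analysis; and your aside about needing the canonical character is harmless but not essential, since for a general nontrivial $\psi$ both sides transform compatibly under $\psi(x)=\lambda(cx)$ after rescaling $a$.
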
\

The next corollary follows from Theorems 3 and 6, Corollary 5, and
by simple change of variables.\\

\begin{corollary}
 Let  $\lambda$ be the canonical additive character of $\fd_q$, and let $a \in \fd_q^*$. Then we have
 \begin{align}
\sum_{w \in G_1(q)} \lambda(aTrw) &= K(\lambda;a),\\
\sum_{w \in G_2(q)} \lambda(aTrw) &= K(\lambda;a)+q-1,\\
\sum_{w \in G_3(q)} \lambda(aTrw) &= q^2(K(\lambda;a)^2+q^3-q)\\
                                  &= q^2(K_2(\lambda;a)+q^3).
 \end{align}
\end{corollary}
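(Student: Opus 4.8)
The plan is to derive each of the three identities from its counterpart in Theorem~3 by the standard device of replacing the nontrivial additive character $\psi$ with the character $x \mapsto \lambda(ax)$, and then massaging the resulting Kloosterman sums. First I would fix $a \in \fd_q^*$ and set $\psi(x) = \lambda(ax)$, which is indeed a nontrivial additive character of $\fd_q$. Applying Theorem~3 to this $\psi$ gives immediately
\[
\sum_{w \in G_1(q)} \lambda(a\,\mathrm{Tr}\,w) = K(\psi;1), \quad
\sum_{w \in G_2(q)} \lambda(a\,\mathrm{Tr}\,w) = K(\psi;1) + q - 1,
\]
\[
\sum_{w \in G_3(q)} \lambda(a\,\mathrm{Tr}\,w) = q^2\bigl(K(\psi;1)^2 + q^3 - q\bigr).
\]
So the whole problem reduces to showing $K(\psi;1) = K(\lambda;a)$ when $\psi = \lambda(a\,\cdot\,)$. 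This is the elementary change of variables: $K(\psi;1) = \sum_{\alpha \in \fd_q^*} \psi(\alpha + \alpha^{-1}) = \sum_{\alpha \in \fd_q^*} \lambda(a\alpha + a\alpha^{-1})$, and substituting $\beta = a\alpha$ (a bijection of $\fd_q^*$, using $\alpha^{-1} = a\beta^{-1}$) yields $\sum_{\beta \in \fd_q^*} \lambda(\beta + a\beta^{-1}) = K(\lambda;a)$. This proves (29)--(31).

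For the final equality (32), I would invoke Theorem~6 (Carlitz): $K_2(\lambda;a) = K(\lambda;a)^2 - q$, hence $K(\lambda;a)^2 + q^3 - q = K_2(\lambda;a) + q + q^3 - q = K_2(\lambda;a) + q^3$, and multiplying by $q^2$ gives (32) from (31). Note that Corollary~5 and Lemma~4, though listed in the hypotheses of the corollary, are not actually needed for this particular statement with exponent $n=1$ — they become relevant only when one later wants to identify $K(\lambda;a^n)$ with $K(\lambda;a)$ in the code-theoretic applications; I would simply not use them here, or mention parenthetically that they are recorded for later use.

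I do not expect any genuine obstacle: the argument is a two-line character substitution plus one citation of Carlitz. The only point requiring a modicum of care is verifying that $\beta \mapsto a\beta$ really does permute $\fd_q^*$ and correctly transforms $\alpha^{-1}$ into $a\beta^{-1}$, so that the argument of $\lambda$ becomes exactly $\beta + a\beta^{-1}$; this is routine. Thus the corollary follows from Theorems~3 and~6 by the indicated change of variables, as the paper asserts.
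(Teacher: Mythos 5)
Your overall strategy is the paper's own (the paper merely remarks that the corollary ``follows from Theorems 3 and 6, Corollary 5, and by simple change of variables''), but your execution of the change of variables contains an algebra error, and that error is exactly what leads you to the incorrect claim that Corollary~5 is dispensable. With $\psi(x)=\lambda(ax)$ and the substitution $\beta=a\alpha$, you have $\alpha^{-1}=a\beta^{-1}$, so the argument of $\lambda$ is $a\alpha+a\alpha^{-1}=\beta+a\cdot(a\beta^{-1})=\beta+a^{2}\beta^{-1}$, \emph{not} $\beta+a\beta^{-1}$. Hence the substitution yields $K(\psi;1)=\sum_{\beta\in\fd_q^*}\lambda(\beta+a^{2}\beta^{-1})=K(\lambda;a^{2})$, not $K(\lambda;a)$. (One can see this without any substitution: the pairs $(u,v)=(a\alpha,a\alpha^{-1})$ satisfy $uv=a^{2}$, so the sum is by definition $K(\lambda;a^2)$.)

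To finish, you must still identify $K(\lambda;a^{2})$ with $K(\lambda;a)$, and that is precisely the content of Corollary~5 with $n=2$ (equivalently, the Frobenius argument: $\alpha\mapsto\alpha^{2}$ permutes $\fd_q^*$ and $tr(x^{2})=tr(x)$, so $\lambda(x^2)=\lambda(x)$); this step uses characteristic two in an essential way. So the gap is small and immediately repairable, but as written your reduction $K(\psi;1)=K(\lambda;a)$ is unjustified and your parenthetical assertion that Lemma~4 and Corollary~5 are ``not actually needed'' is wrong --- the paper cites Corollary~5 exactly because it is needed here. The treatment of $G_3(q)$ via Theorem~6 (Carlitz) is correct once $K(\psi;1)=K(\lambda;a)$ is properly established.
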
\

\begin{proposition}
Let  $\lambda$ be the canonical additive character of $\fd_q$, $m
\in \z_{> 0}$, $\beta \in \fd_q$ . Then
\begin{align}
\begin{split}
& \sum_{a \in \fd_q^*} \lambda(-a \beta) K_m(\lambda;a) \\
&= \left\{%
\begin{array}{ll}
    qK_{m-1}(\lambda;\beta^{-1})+(-1)^{m+1}, & \hbox{if $\beta \neq 0$,} \\
    (-1)^{m+1}, & \hbox{if $\beta = 0$,} \\
\end{array}%
\right.
\end{split}
\end{align}
with the convention $K_0(\lambda;\beta^{-1})=\lambda(\beta^{-1})$.
\end{proposition}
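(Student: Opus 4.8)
The plan is to evaluate the sum $\sum_{a\in\fd_q^*}\lambda(-a\beta)K_m(\lambda;a)$ by unfolding the definition of the $m$-dimensional Kloosterman sum and then performing the sum over $a$ first. Writing $K_m(\lambda;a)=\sum_{\alpha_1,\ldots,\alpha_m\in\fd_q^*}\lambda(\alpha_1+\cdots+\alpha_m+a\alpha_1^{-1}\cdots\alpha_m^{-1})$ and interchanging the order of summation, the quantity becomes
\[
\sum_{\alpha_1,\ldots,\alpha_m\in\fd_q^*}\lambda(\alpha_1+\cdots+\alpha_m)\sum_{a\in\fd_q^*}\lambda\!\left(a(\alpha_1^{-1}\cdots\alpha_m^{-1}-\beta)\right).
\]
The inner sum over $a\in\fd_q^*$ is a complete additive character sum minus the $a=0$ term: it equals $q-1$ when the coefficient $\alpha_1^{-1}\cdots\alpha_m^{-1}-\beta$ vanishes, i.e.\ when $\alpha_1\cdots\alpha_m=\beta^{-1}$ (possible only if $\beta\neq0$), and it equals $-1$ otherwise. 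So I would split the outer sum according to whether $\alpha_1\cdots\alpha_m$ equals $\beta^{-1}$ or not.

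Next I collect the two contributions. The ``$-1$'' piece, summed over all $(\alpha_1,\ldots,\alpha_m)\in(\fd_q^*)^m$, contributes $-\sum_{\alpha_1,\ldots,\alpha_m}\lambda(\alpha_1+\cdots+\alpha_m)=-(\sum_{\alpha\in\fd_q^*}\lambda(\alpha))^m=-(-1)^m=(-1)^{m+1}$. The ``$q-1$'' versus ``$-1$'' discrepancy on the locus $\alpha_1\cdots\alpha_m=\beta^{-1}$ adds an extra $q$ on that locus, giving $q\sum_{\alpha_1\cdots\alpha_m=\beta^{-1}}\lambda(\alpha_1+\cdots+\alpha_m)$. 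When $\beta=0$ the constraint locus is empty, so only the $(-1)^{m+1}$ term survives, which is the second case of the statement. When $\beta\neq0$, I must identify $\sum_{\alpha_1\cdots\alpha_m=\beta^{-1}}\lambda(\alpha_1+\cdots+\alpha_m)$ with $K_{m-1}(\lambda;\beta^{-1})$. For $m\geq2$ this is essentially a reindexing: fixing $\alpha_1,\ldots,\alpha_{m-1}\in\fd_q^*$ freely forces $\alpha_m=\beta^{-1}\alpha_1^{-1}\cdots\alpha_{m-1}^{-1}$, and substituting yields $\sum_{\alpha_1,\ldots,\alpha_{m-1}\in\fd_q^*}\lambda(\alpha_1+\cdots+\alpha_{m-1}+\beta^{-1}\alpha_1^{-1}\cdots\alpha_{m-1}^{-1})=K_{m-1}(\lambda;\beta^{-1})$. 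For $m=1$ the locus is the single point $\alpha_1=\beta^{-1}$, giving $\lambda(\beta^{-1})$, which matches the stated convention $K_0(\lambda;\beta^{-1})=\lambda(\beta^{-1})$.

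Assembling the pieces gives exactly $qK_{m-1}(\lambda;\beta^{-1})+(-1)^{m+1}$ for $\beta\neq0$ and $(-1)^{m+1}$ for $\beta=0$, as claimed. I do not anticipate a serious obstacle here; the only point requiring a little care is the bookkeeping in the case split for the inner character sum and the verification that the reindexing on the locus $\alpha_1\cdots\alpha_m=\beta^{-1}$ is a genuine bijection onto $(\fd_q^*)^{m-1}$ (each of $\alpha_1,\ldots,\alpha_{m-1}$ ranges over $\fd_q^*$ and then $\alpha_m$ is determined and automatically lies in $\fd_q^*$). Handling $m=1$ separately via the stated convention closes the argument.
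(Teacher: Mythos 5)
Your proof is correct and follows essentially the same route as the paper's: interchange the order of summation, evaluate the inner character sum over $a$ by the complete-sum-minus-$a=0$ decomposition, and identify the restricted sum over the locus $\alpha_1\cdots\alpha_m=\beta^{-1}$ with $K_{m-1}(\lambda;\beta^{-1})$ (or $\lambda(\beta^{-1})$ when $m=1$). Nothing further is needed.
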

\begin{proof}
(31) is equal to
\begin{align*}
& \sum_{\alpha_1,\ldots,\alpha_m \in \fd_q^*}\lambda(\alpha_1 +
\cdots + \alpha_m)\sum_{a \in \fd_q^*}
\lambda(a(\alpha_1^{-1}\cdots \alpha_m^{-1}-\beta)) \\
&= \sum_{\alpha_1,\ldots,\alpha_m \in \fd_q^*}\lambda(\alpha_1 +
\cdots + \alpha_m)\sum_{a \in \fd_q} \lambda(a(\alpha_1^{-1}\cdots
\alpha_m^{-1}-\beta))\\
 & ~~~~~~~~~~~~~~~~~~~~~~~~~~~~-\sum_{\alpha_1,\ldots,\alpha_m \in
\fd_q^*}\lambda(\alpha_1 +\cdots + \alpha_m) \\
&= q \sum \lambda(\alpha_1 + \cdots + \alpha_m) + (-1)^{m+1}.
\end{align*}
Here the sum runs over all $\alpha_1,\ldots,\alpha_m \in \fd_q^*$
satisfying $\alpha_1^{-1}\cdots \alpha_m^{-1} = \beta$, so that it
is given by
\[
\left\{%
\begin{array}{ll}
    0, & \hbox{if $\beta = 0$,} \\
    K_{m-1}(\lambda;\beta^{-1}), & \hbox{if $\beta \neq 0$,~and~$m > 1$,} \\
    \lambda(\beta^{-1}), & \hbox{if $\beta \neq 0$,~and ~$m=1$.} \\
\end{array}%
\right.
\]
\end{proof}

 Let  $G(q)$ be  one of finite classical groups over $\fd_q$. Then we put, for
 each $\beta \in \fd_q$,
 \[
N_{G(q)}(\beta) = \mid \{ w \in G(q) \mid Tr(w) = \beta \} \mid .
 \]
Then it is easy to see that
\begin{equation}
qN_{G(q)}(\beta) = \mid G(q) \mid + \sum_{a \in \fd_q^*}
\lambda(-a \beta)\sum_{w \in G(q)} \lambda(a ~Trw).
\end{equation}
For brevity, we write
\begin{equation}
n_1(\beta) = N_{G_1(q)}(\beta), n_2(\beta) = N_{G_2(q)}(\beta),
n_3(\beta) = N_{G_3(q)}(\beta).
\end{equation}
Using  (27), (28), (30)--(32), and (37), one derives the
following.

\begin{proposition}With $n_1(\beta), n_2(\beta), n_3(\beta)$ as in
(33), we have
\begin{align}
& n_1(\beta) = \left\{%
\begin{array}{ll}
    1, & \hbox{if $\beta = 0$,} \\
    2, & \hbox{if $\beta \neq 0$ with $tr(\beta^{-1}) = 0$,} \\
    0, & \hbox{if $\beta \neq 0$ with $tr(\beta^{-1}) = 1$,} \\
\end{array}%
\right. \\
& n_2(\beta) = \left\{%
\begin{array}{ll}
    q, & \hbox{if $\beta = 0$,} \\
    2, & \hbox{if $\beta \neq 0$ with $tr(\beta^{-1}) = 0$,} \\
    0, & \hbox{if $\beta \neq 0$ with $tr(\beta^{-1}) = 1$,} \\
\end{array}%
\right.\\
& n_3(\beta) = \left\{%
\begin{array}{ll}
    q^3(2q^2-q-2), & \hbox{if $\beta = 0$,} \\
    q^2\{q(q+1)(q-2)+K(\lambda;\beta^{-1})\}, & \hbox{if $\beta \neq 0$.} \\
\end{array}%
\right.
\end{align}
\end{proposition}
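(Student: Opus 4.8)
The plan is to read off each $n_i(\beta)$ directly from the master identity (32), substituting the Gauss sum evaluations of Corollary 7 and then collapsing the resulting twisted Kloosterman sums via Proposition 8. Before starting I would record the three group orders: putting $n=1$ in (16) and in the order formula displayed just after (17) gives $|G_1(q)| = |SO^+(2,q)| = q-1$ and $|G_2(q)| = |O^+(2,q)| = 2(q-1)$, while $n=2$ gives $|G_3(q)| = |SO^+(4,q)| = q^2(q^2-1)^2$. I would also note the elementary fact that $\sum_{a \in \fd_q^*} \lambda(-a\beta)$ equals $q-1$ if $\beta = 0$ and $-1$ if $\beta \neq 0$, since it enters the $G_2$ and $G_3$ computations.

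For $G_1$, substituting (27) into (32) gives $q\,n_1(\beta) = (q-1) + \sum_{a \in \fd_q^*}\lambda(-a\beta)K(\lambda;a)$, and Proposition 8 with $m=1$ (using the convention $K_0(\lambda;\beta^{-1}) = \lambda(\beta^{-1})$) evaluates the sum. This yields $q\,n_1(0) = q$, and for $\beta \neq 0$, $q\,n_1(\beta) = q\bigl(1+\lambda(\beta^{-1})\bigr)$; since $\lambda(\beta^{-1}) = (-1)^{tr(\beta^{-1})}$, the three cases of the claimed formula for $n_1$ follow at once. The argument for $n_2$ is the same, except that substituting (28) into (32) produces an extra term $(q-1)\sum_{a \in \fd_q^*}\lambda(-a\beta)$, which the character-sum fact above disposes of; one gets $q\,n_2(0) = 2(q-1) + 1 + (q-1)^2 = q^2$ and, for $\beta \neq 0$, $q\,n_2(\beta) = 2(q-1) + q\lambda(\beta^{-1}) + 1 - (q-1) = q\bigl(1+\lambda(\beta^{-1})\bigr)$, which is the asserted formula.

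For $G_3$ I would use the $K_2$-form (30) of the Gauss sum, so that (32) reads $q\,n_3(\beta) = q^2(q^2-1)^2 + q^2\sum_{a\in\fd_q^*}\lambda(-a\beta)K_2(\lambda;a) + q^5\sum_{a\in\fd_q^*}\lambda(-a\beta)$. Applying Proposition 8 with $m=2$ to the middle sum (here the boundary term is $(-1)^{m+1} = -1$, and for $\beta\neq 0$ the main term is $qK_1(\lambda;\beta^{-1}) = qK(\lambda;\beta^{-1})$) and the character-sum fact to the last, one obtains for $\beta = 0$ that $q\,n_3(0) = q^2\bigl[(q^2-1)^2 - 1 + q^3(q-1)\bigr] = q^4(2q^2-q-2)$, and for $\beta\neq 0$ that $q\,n_3(\beta) = q^2\bigl[(q^2-1)^2 - 1 - q^3 + qK(\lambda;\beta^{-1})\bigr] = q^3\bigl[q(q+1)(q-2) + K(\lambda;\beta^{-1})\bigr]$; dividing by $q$ gives the two cases of the claimed formula for $n_3$.

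There is no genuine conceptual obstacle: everything reduces to substitution into (32) followed by a single application of Proposition 8. The only points that need care are keeping the $(-1)^{m+1}$ boundary terms of Proposition 8 straight across the $\beta = 0$ and $\beta\neq 0$ cases, and the polynomial bookkeeping in the $G_3$ case, namely the identities $(q^2-1)^2 - 1 + q^3(q-1) = q^2(2q^2-q-2)$ and $(q^2-1)^2 - 1 - q^3 = q^2(q+1)(q-2)$.
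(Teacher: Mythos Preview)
Your proposal is correct and is exactly the approach the paper intends: the paper's proof is the single sentence ``Using (27), (28), (30)--(32), and (37), one derives the following,'' and you have filled in precisely those substitutions (Corollary~7 into (32), then Proposition~8, with the group orders from (37)) together with the routine algebra. The arithmetic you record, including the two polynomial identities for the $G_3$ case, is accurate.
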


\section{Construction of codes}

Let
\begin{equation}
\begin{split}
& N_1=|G_1(q)|=q-1, N_2=|G_2(q)|=2(q-1),\\
& N_3=|G_3(q)|=q^2(q^2-1)^2.
\end{split}
\end{equation}
Here we will construct three binary linear codes $C(G_1(q))$ of
length $N_1$, $C(G_2(q))$ of length $N_2$ , and $C(G_3(q))$ of
length $N_3$, respectively associated with the orthogonal groups
$G_1(q)$,$G_2(q)$ , and $G_3(q)$.

By abuse of notations, for  $i=1,2,3$, let
$g_1,g_2,\ldots,g_{N_i}$ be a fixed ordering of the elements in
the group $G_i(q)$. Also, for $i=1,2,3$, we put
\[
v_i = (Trg_1,Trg_2,\ldots,Trg_{N_i}) \in \fd_q^{N_i}.
\]
Then, for  $i=1,2,3$, the binary linear code $C(G_i(q))$ is
defined as

\begin{equation}
C(G_i(q)) = \{ u \in \fd_2^{N_i} \mid u\cdot v_i = 0 \},
\end{equation}
where the dot denotes the usual inner product in $\fd_q^{N_i}$.

 The following Delsarte's theorem is well-known.\\

 \begin{theorem}[\cite{FN}]  Let $B$  be a linear code over $\fd_q$.  Then
\[
(B|_{\fd_2})^\bot = tr(B^\bot).
\]
 \end{theorem}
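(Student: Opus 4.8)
The final statement to prove is Delsarte's theorem: for a linear code $B$ over $\fd_q$ (with $q = 2^r$), the dual of the subfield subcode $B|_{\fd_2}$ equals the trace code of the dual, i.e. $(B|_{\fd_2})^\bot = tr(B^\bot)$. Since this is a cited classical result (\cite{FN}), the expectation is a short self-contained proof rather than anything novel. The plan is to prove the two inclusions using the nondegeneracy of the trace pairing $\fd_q \times \fd_q \to \fd_2$, $(x,y) \mapsto tr(xy)$, which is the key structural fact that makes everything work.

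First I would establish the easy inclusion $tr(B^\bot) \subseteq (B|_{\fd_2})^\bot$. Take $c \in B^\bot$ and $u \in B|_{\fd_2} \subseteq B$. Then the $\fd_q$-inner product $\sum_i c_i u_i = 0$ in $\fd_q$, so applying the trace, $\sum_i tr(c_i u_i) = tr(0) = 0$. Because each $u_i \in \fd_2$, we have $tr(c_i u_i) = u_i \, tr(c_i)$ (as $tr$ is $\fd_2$-linear and $u_i$ is a scalar in the prime field), hence $\sum_i tr(c_i) u_i = 0$ in $\fd_2$; that is, $tr(c) = (tr(c_1), \ldots, tr(c_n))$ is orthogonal to every $u \in B|_{\fd_2}$, so $tr(c) \in (B|_{\fd_2})^\bot$. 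Extending $\fd_2$-linearly gives $tr(B^\bot) \subseteq (B|_{\fd_2})^\bot$.

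For the reverse inclusion the cleanest route is a dimension count, which forces equality once one inclusion is known. Here I would use the general identity $\dim_{\fd_2} tr(D) = \dim_{\fd_2}(D|_{\fd_2}) = n - \dim_{\fd_2}((D|_{\fd_2})^\bot)$ applied in a form relating $D$ and $D^\bot$, together with the fact that $(B|_{\fd_2})^{\bot\bot} = B|_{\fd_2}$. Concretely: applying the already-proved inclusion to $B^\bot$ in place of $B$ gives $tr(B) = tr((B^\bot)^\bot) \subseteq (B^\bot|_{\fd_2})^\bot$. Dualizing the first inclusion gives $(B|_{\fd_2}) = (B|_{\fd_2})^{\bot\bot} \supseteq tr(B^\bot)^\bot$. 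The standard dimension relation for subfield subcodes and trace codes — namely $\dim_{\fd_2}(B|_{\fd_2}) + \dim_{\fd_2} tr(B^\bot) = n$, which itself follows from Delsarte's duality at the level of dimensions (a consequence of the nondegenerate trace form) — then shows the two $\fd_2$-spaces $tr(B^\bot)$ and $(B|_{\fd_2})^\bot$ have the same dimension, and the inclusion from the first step upgrades to equality.

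The main obstacle is the dimension count: proving $\dim_{\fd_2}(B|_{\fd_2}) + \dim_{\fd_2} tr(B^\bot) = n$ cleanly. I would handle this by viewing $\fd_q^n$ as an $\fd_2$-space of dimension $rn$ equipped with the nondegenerate symmetric $\fd_2$-bilinear form $\langle x, y\rangle = \sum_i tr(x_i y_i)$, under which $B^\bot$ (the $\fd_q$-dual) and the $\fd_2$-orthogonal complement of the $\fd_2$-span of $B$ coincide by nondegeneracy of the trace. Then $B|_{\fd_2} = B \cap \fd_2^n$ and $tr(B^\bot)$ are realized as $\ker$ and $\mathrm{im}$ of complementary projection/trace maps, and the rank–nullity theorem over $\fd_2$ delivers the identity. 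Everything reduces to linear algebra over $\fd_2$ once the trace pairing is recognized as nondegenerate; no deeper input is needed.
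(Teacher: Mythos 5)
The paper offers no proof of this statement at all: it is quoted verbatim as Delsarte's theorem from MacWilliams--Sloane \cite{FN} and used as a black box to identify $C(G_i(q))^\bot$ with the trace code in (39). So there is no "paper proof" to compare against, and your proposal should be judged on its own. Your first inclusion $tr(B^\bot)\subseteq (B|_{\fd_2})^\bot$ is complete and correct: for $c\in B^\bot$ and $u\in B|_{\fd_2}\subseteq B$ one has $0=tr\bigl(\sum_i c_iu_i\bigr)=\sum_i u_i\,tr(c_i)$ since each $u_i\in\fd_2$. Your overall strategy (easy inclusion plus a dimension count forced by the nondegenerate trace pairing) is the standard textbook route and does work.

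The one place you are too brisk is the dimension identity $\dim_{\fd_2}(B|_{\fd_2})+\dim_{\fd_2}tr(B^\bot)=n$. Rank--nullity applied to the trace map $T:\fd_q^n\to\fd_2^n$ restricted to $B^\bot$ gives $\dim tr(B^\bot)=\dim_{\fd_2}B^\bot-\dim_{\fd_2}(B^\bot\cap\ker T)$, and the term $\dim_{\fd_2}(B^\bot\cap\ker T)$ is not delivered by rank--nullity alone; you must go back to the orthogonality. The clean way to finish in your framework: with $\langle x,y\rangle=\sum_i tr(x_iy_i)$ nondegenerate on $\fd_q^n$ as an $\fd_2$-space, the $\fd_q$-linearity of $B$ gives $B^{\perp}=B^\bot$ (since $tr(\alpha\sum c_ib_i)=0$ for all $\alpha$ forces $\sum c_ib_i=0$), and $(\fd_2^n)^{\perp}=\ker T$; hence $B^\bot\cap\ker T=(B+\fd_2^n)^{\perp}$, whose dimension is $rn-\bigl(r\dim_{\fd_q}B+n-\dim_{\fd_2}(B|_{\fd_2})\bigr)$, and the identity follows. (Alternatively, one can avoid dimensions entirely: if $v\in\fd_2^n$ lies outside $tr(B^\bot)$, pick $u\in\fd_2^n$ with $u\cdot tr(c)=0$ for all $c\in B^\bot$ but $u\cdot v=1$; then $tr(\alpha(u\cdot c))=0$ for all $\alpha\in\fd_q$ forces $u\in(B^\bot)^\bot\cap\fd_2^n=B|_{\fd_2}$, so $v\notin(B|_{\fd_2})^\bot$.) A minor caution about your setup: the trace form restricted to $\fd_2^n$ equals $tr(1)\sum u_iv_i$, which vanishes identically when $r$ is even, so one cannot identify the binary dual inside $\fd_2^n$ with a restriction of the trace form; your argument does not actually need this, but it is worth not leaning on it.
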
\

 In view of this theorem, the dual $C(G_i(q))^\bot (i=1,2,3)$ is given by
\begin{equation}
C(G_i(q))^\bot = \{ c(a) = (tr(aTrg_1),\ldots,tr(aTrg_{N_i}))| a
\in \fd_q \}.
\end{equation}

Let  $\fd_2^+,\fd_q^+$ denote the additive groups of the fields
$\fd_2,\fd_q$, respectively. Then, with  $\Theta(x)=x^2+x$
denoting the Artin-Schreier operator in characteristic two, we
have the following exact sequence of groups:
\begin{equation}
0 \rightarrow \fd_2^+ \rightarrow \fd_q^+ \rightarrow
\Theta(\fd_q) \rightarrow 0.
\end{equation}
Here the first map is the inclusion and the second one is given by
$x \mapsto \Theta(x) = x^2+x$. So
\begin{equation}
\Theta(\fd_q) = \{\alpha^2 + \alpha \mid  \alpha \in \fd_q \},~
and ~~[\fd_q^+ : \Theta(\fd_q)] = 2.
\end{equation}

\begin{theorem}
 Let $\lambda$  be the canonical additive character of $\fd_q$, and let $\beta \in \fd_q^*$. Then
\begin{equation}
 (a) \sum_{\alpha \in
 \fd_q-\{0,1\}}\lambda(\frac{\beta}{\alpha^2+\alpha})=K(\lambda;\beta)-1,~~~~~~~~~~~~~~~~~~~~~~~
\end{equation}
~(b)$\dis\sum_{\alpha \in
\fd_q}\lambda(\frac{\beta}{\alpha^2+\alpha+a})=
-K(\lambda;\beta)-1$, if $x^2+x+a (a \in \fd_q)$ is irreducible
over $\fd_q$, or equivalently if $a \in
\fd_q\setminus\Theta(\fd_q)$ (cf.(41)).
\end{theorem}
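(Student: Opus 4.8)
The plan is to reduce both (a) and (b) to sums over $\fd_q^*$ of the canonical character, i.e.\ to Kloosterman sums, by analyzing the fibers of the Artin--Schreier map $\Theta(\alpha)=\alpha^2+\alpha$. For part (a), observe that as $\alpha$ ranges over $\fd_q\setminus\{0,1\}$, the quantity $\alpha^2+\alpha=\Theta(\alpha)$ ranges over $\Theta(\fd_q)\setminus\{0\}$, and by the exact sequence (41)--(43) the map $\Theta$ is exactly two-to-one onto its image $\Theta(\fd_q)$, with $\Theta(\alpha)=\Theta(\alpha+1)$. Hence
\[
\sum_{\alpha\in\fd_q\setminus\{0,1\}}\lambda\!\left(\frac{\beta}{\alpha^2+\alpha}\right)
= 2\sum_{\substack{u\in\Theta(\fd_q)\\ u\neq 0}}\lambda\!\left(\frac{\beta}{u}\right).
\]
So the first key step is to evaluate $S:=\sum_{u\in\Theta(\fd_q),\,u\neq0}\lambda(\beta/u)$. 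I would compute this by summing a detector for membership in $\Theta(\fd_q)$: since $[\fd_q^+:\Theta(\fd_q)]=2$, the indicator of $u\in\Theta(\fd_q)$ is $\tfrac12\bigl(1+(-1)^{tr(u)}\bigr)=\tfrac12\bigl(1+\lambda(u)\bigr)$ (using that $tr$ is the additive homomorphism onto $\fd_2$ whose kernel is $\Theta(\fd_q)$, by Hilbert 90 / the additive Artin--Schreier theory). Then, substituting $u=\alpha^{-1}$ as $u$ runs over $\fd_q^*$,
\[
2S=\sum_{u\in\fd_q^*}\bigl(1+\lambda(u)\bigr)\lambda\!\left(\frac{\beta}{u}\right)
=\sum_{u\in\fd_q^*}\lambda\!\left(\frac{\beta}{u}\right)+\sum_{u\in\fd_q^*}\lambda\!\left(u+\frac{\beta}{u}\right).
\]
The second sum is exactly $K(\lambda;\beta)$; the first sum equals $\sum_{\alpha\in\fd_q^*}\lambda(\beta\alpha)=-1$ since $\beta\neq0$. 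Therefore $2S=K(\lambda;\beta)-1$, and the left side of (a) is $2\cdot\tfrac12(K(\lambda;\beta)-1)=K(\lambda;\beta)-1$, as claimed.

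For part (b), the hypothesis that $x^2+x+a$ is irreducible over $\fd_q$ is equivalent (again by Artin--Schreier theory) to $a\notin\Theta(\fd_q)$, i.e.\ $tr(a)=1$. In this case $\alpha\mapsto\alpha^2+\alpha+a$ is a bijection from $\fd_q$ onto the coset $a+\Theta(\fd_q)=\fd_q\setminus\Theta(\fd_q)$, and this map is two-to-one precisely... no: since $\Theta$ is two-to-one, $\alpha\mapsto\Theta(\alpha)+a$ is two-to-one onto $a+\Theta(\fd_q)$, which has size $q/2$; but the domain $\fd_q$ has size $q$, consistent. Also note $0\notin a+\Theta(\fd_q)$ since $a\notin\Theta(\fd_q)$, so there is no denominator-zero issue. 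Hence
\[
\sum_{\alpha\in\fd_q}\lambda\!\left(\frac{\beta}{\alpha^2+\alpha+a}\right)
=2\sum_{v\in a+\Theta(\fd_q)}\lambda\!\left(\frac{\beta}{v}\right)
=2\sum_{v\in\fd_q^*,\ tr(v)=1}\lambda\!\left(\frac{\beta}{v}\right),
\]
the last equality because every nonzero element of $a+\Theta(\fd_q)$ is a nonzero element with $tr(v)=1$, and conversely. Now use the detector $\tfrac12(1-\lambda(v))$ for $tr(v)=1$ and the substitution $v=u^{-1}$ as before:
\[
2\sum_{\substack{v\in\fd_q^*\\ tr(v)=1}}\lambda\!\left(\frac{\beta}{v}\right)
=\sum_{u\in\fd_q^*}\bigl(1-\lambda(u)\bigr)\lambda\!\left(\frac{\beta}{u}\right)
=\Bigl(\sum_{u\in\fd_q^*}\lambda(\beta u)\Bigr)-K(\lambda;\beta)=-1-K(\lambda;\beta),
\]
which is the asserted value $-K(\lambda;\beta)-1$.

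The only genuinely delicate points are bookkeeping ones: verifying that $\Theta$ is exactly two-to-one onto $\Theta(\fd_q)$ with fibers $\{\alpha,\alpha+1\}$ (immediate from (41)), confirming that $0$ is excluded from the relevant index set in each case so that $\beta/(\cdot)$ is always defined (in (a) we removed $\alpha\in\{0,1\}$ precisely to kill $u=0$; in (b) automatic since $a\notin\Theta(\fd_q)$), and justifying the indicator identities $\mathbf{1}[tr(u)=0]=\tfrac12(1+\lambda(u))$ and $\mathbf{1}[tr(u)=1]=\tfrac12(1-\lambda(u))$, which follow from $\lambda(u)=(-1)^{tr(u)}$ together with the fact that $tr:\fd_q^+\to\fd_2^+$ is surjective with kernel $\Theta(\fd_q)$. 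I expect the main obstacle to be nothing conceptual but rather making sure the "$-1$" terms are tracked correctly — i.e.\ that $\sum_{u\in\fd_q^*}\lambda(\beta u)=-1$ for $\beta\neq0$ is applied with the right sign in each of the two computations — so that the final constants come out as $-1$ in (a) and $-1$ in (b) with the stated signs on $K(\lambda;\beta)$.
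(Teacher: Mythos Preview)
Your proof is correct, and it takes a genuinely different route from the paper's.

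For part (a), the paper computes the auxiliary sum $\sum_{a\in\fd_q^*}\lambda(-\beta^{-1}a)K(\lambda;a)^2$ in two ways: once via Carlitz's identity $K_2(\lambda;a)=K(\lambda;a)^2-q$ together with the moment formula (31), obtaining $qK(\lambda;\beta)-q-1$; and once by expanding the square directly, obtaining $q\sum_{\alpha\in\fd_q\setminus\{0,1\}}\lambda\bigl(\beta/(\alpha^2+\alpha)\bigr)-1$ after a sequence of substitutions. Equating the two gives (a). Part (b) is then deduced from (a) by writing $\fd_q^*$ as the disjoint union of $\Theta(\fd_q)\setminus\{0\}$ and the nontrivial coset $a+\Theta(\fd_q)$.

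Your argument is more elementary and self-contained: you bypass $K_2$ and Carlitz's identity entirely by using the character detector $\tfrac12(1\pm\lambda(u))$ for the cosets of $\Theta(\fd_q)=\ker(tr)$ in $\fd_q^+$. This reduces each sum directly to $\sum_{u\in\fd_q^*}\lambda(\beta/u)=-1$ plus or minus $\sum_{u\in\fd_q^*}\lambda(u+\beta/u)=K(\lambda;\beta)$. The paper's approach, by contrast, sits naturally within its broader framework of relating Kloosterman moments to Gauss sums over matrix groups, so the appearance of $K_2$ is thematically consistent there; but as a stand-alone proof of this lemma, your detector method is shorter and requires no external input beyond the definition of $K(\lambda;\beta)$ and the identification $\Theta(\fd_q)=\ker(tr)$. (One small cosmetic point: in your display for (b) the phrase ``substitution $v=u^{-1}$'' is not actually needed---you are simply renaming $v$ to $u$ and then using $\sum_{u\in\fd_q^*}\lambda(\beta/u)=-1$, which holds by the bijection $u\mapsto\beta/u$ on $\fd_q^*$.)
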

\begin{proof}
(a) We compute the following sum in two different ways:
\begin{equation}
\sum_{a \in \fd_q^*} \lambda(-\beta^{-1}a)K(\lambda;a)^2.
\end{equation}
On the one hand, using (26) we see that (43) is equal to
\begin{equation}
\begin{split}
&\sum_{a \in \fd_q^*} \lambda(-\beta^{-1}a)(q+K_2(\lambda;a))\\
&=-q + \sum_{a \in \fd_q^*} \lambda(-\beta^{-1}a)K_2(\lambda;a)\\
&=-q-1+qK(\lambda;\beta) (cf. (31)).
\end{split}
\end{equation}
On the other hand, we see that (43) equals
\begin{equation*}
\begin{split}
&\sum_{\alpha_1,\alpha_2 \in
\fd_q^*}\lambda(\alpha_1+\alpha_2)\sum_{a \in
\fd_q^*}\lambda(a(\alpha_1^{-1}+\alpha_2^{-1}-\beta^{-1}))\\
&= q\sum\lambda(\alpha_1+\alpha_2)-1
\end{split}
\end{equation*}
(with the sum running over all $\alpha_1,\alpha_2 \in \fd_q^*$,
satisfying $\alpha_1^{-1}+\alpha_2^{-1}=\beta^{-1}$)
\begin{equation}
\begin{split}
~~~&=q\sum_{\alpha_1 \in
\fd_q-\{0,\beta\}}\lambda(\alpha_1+(\alpha_1^{-1}+\beta^{-1})^{-1})-1\\
&=q\sum_{\alpha_1 \in
\fd_q-\{0,\beta^{-1}\}}\lambda(\alpha_1^{-1}+(\alpha_1+\beta^{-1})^{-1})-1(\alpha_1\rightarrow \alpha_1^{-1})\\
&=q\sum_{\alpha_1 \in
\fd_q-\{0,1\}}\lambda(\frac{\beta}{\alpha_1(\alpha_1+1)})-1
(\alpha_1 \rightarrow \beta^{-1}\alpha_1).
\end{split}
\end{equation}
Equating (44) and (45), the result (a) follows.
\begin{align}
(b) & \sum_{\alpha \in \fd_q - \{0,1\}}
\lambda(\frac{\beta}{\alpha^2+\alpha}) = 2\sum_{\gamma \in
\Theta(\fd_q)-\{0\}} \lambda(\frac{\beta}{\gamma}),\\
 & \sum_{\gamma \in \Theta(\fd_q)-\{0\}}
\lambda(\frac{\beta}{\gamma}) + \sum_{\gamma \in \Theta(\fd_q)}
\lambda(\frac{\beta}{\gamma + a})
= \sum_{\gamma \in \fd_q^*}\lambda(\frac{\beta}{\gamma}) = -1.
\end{align}
So
\begin{align*}
\sum_{\alpha \in \fd_q}\lambda(\frac{\beta}{\alpha^2+\alpha+a})&=
2\sum_{\gamma \in
\Theta(\fd_q)} \lambda(\frac{\beta}{\gamma + a})(cf.(41))\\
&= -2-2\sum_{\gamma \in \Theta(\fd_q)-\{0\}}
\lambda(\frac{\beta}{\gamma})(cf.(47))\\
&= -2 -\sum_{\alpha \in \fd_q-\{0,1\}}
\lambda(\frac{\beta}{\alpha^2+\alpha})(cf.(46))\\
&=-2-(K(\lambda;\beta)-1) (cf. (42))\\
&= -1-K(\lambda;\beta).
\end{align*}
\end{proof}

\begin{theorem}
(a) For $q=2^r$, with $r \geq 3$, the map $\fd_q \rightarrow
C(G_i(q))^\bot (a \mapsto c(a))$, for $i=1,2,$ is an
$\fd_2$-linear
isomorphism. \\
(b) For any $q=2^r$, the map  $\fd_q \rightarrow
C(G_3(q))^\bot$$(a \mapsto c(a))$ is an $\fd_2$-linear
isomorphism.
\end{theorem}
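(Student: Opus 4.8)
The plan is to observe that $\fd_2$-linearity and surjectivity of $a\mapsto c(a)$ are automatic, so that the whole statement reduces to injectivity, and then to translate injectivity into a statement about a single Kloosterman sum which the Weil bound refutes. First I would record that $\phi_i\colon\fd_q\to\fd_2^{N_i}$, $a\mapsto c(a)=(tr(aTrg_1),\ldots,tr(aTrg_{N_i}))$, is $\fd_2$-linear (because $tr\colon\fd_q\to\fd_2$ is additive and $c(a+b)=c(a)+c(b)$ coordinatewise) and has image exactly $C(G_i(q))^{\bot}$ by the description (39); thus $\phi_i$ is an $\fd_2$-linear surjection onto the claimed target, and it is an isomorphism precisely when $\ker\phi_i=\{0\}$. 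To compute the kernel I would note that $c(a)=0$ means $tr(aTrw)=0$, i.e.\ $\lambda(aTrw)=1$, for every $w\in G_i(q)$; since $\sum_{w\in G_i(q)}\lambda(aTrw)$ is a sum of $N_i=|G_i(q)|$ terms each equal to $\pm1$, this is equivalent to $\sum_{w\in G_i(q)}\lambda(aTrw)=N_i$. Hence it suffices to show that for $a\in\fd_q^{*}$ this Gauss sum is never $N_i$; the value $a=0$ only records the trivial $c(0)=0$.

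Here Corollary 7 does all the work. For $a\in\fd_q^{*}$, the relation $c(a)=0$ would force: when $i=1$, $K(\lambda;a)=N_1=q-1$; when $i=2$, $K(\lambda;a)+q-1=N_2=2(q-1)$, i.e.\ again $K(\lambda;a)=q-1$; and when $i=3$, $q^{2}(K(\lambda;a)^{2}+q^{3}-q)=N_3=q^{2}(q^{2}-1)^{2}$, i.e.\ $K(\lambda;a)^{2}=(q^{2}-1)^{2}-q^{3}+q=q^{4}-q^{3}-2q^{2}+q+1$.

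To finish I would apply the Weil bound (19). For $i=1,2$ it gives $|K(\lambda;a)|\le 2\sqrt q$, while $q-1>2\sqrt q$ exactly when $q^{2}-6q+1>0$, i.e.\ for $q\ge 8$ --- precisely the hypothesis $r\ge 3$; this contradiction gives $\ker\phi_i=\{0\}$ and proves (a). For $i=3$ the Weil bound gives $K(\lambda;a)^{2}\le 4q$, whereas $q^{4}-q^{3}-2q^{2}+q+1>4q$ for every $q=2^{r}$ with $r\ge 2$; the one leftover value $q=2$ I would dispose of directly, since there $K(\lambda;a)^{2}$ would have to equal $3$, which is not a square (equivalently, $n_3(1)=4>0$ by Proposition 9, so $c(1)\ne 0$). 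Hence $\ker\phi_3=\{0\}$ and (b) follows. I anticipate no serious obstacle --- the argument is a one-line reformulation plus an elementary estimate; the only points needing care are the borderline case $q=2$ in (b), which the Weil bound does not reach, and verifying that the hypothesis $r\ge 3$ in (a) is genuinely necessary (it is: $K(\lambda;1)=3=q-1$ when $q=4$, so $\phi_i$ is not injective there).
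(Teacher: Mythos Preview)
Your proof is correct and reaches the same destination as the paper---injectivity via a Weil-bound contradiction---but by a shorter path. For part (a), the paper first uses the explicit values $n_1(\beta)$ from Proposition 9 to deduce that $tr(a\beta)=0$ for every $\beta\in\fd_q^{*}$ with $tr(\beta^{-1})=0$, then invokes the identity of Theorem 11(a) to convert this into $K(\lambda;a)=q-1$; you bypass both steps by applying Corollary 7 directly to the equation $\sum_{w\in G_i}\lambda(aTrw)=N_i$. For part (b) the two arguments diverge more: the paper checks (again via the Weil bound, but applied inside the formula (36) for $n_3(\beta)$) that every trace value is actually attained in $SO^{+}(4,q)$, so that $c(a)=0$ forces $tr(a\beta)=0$ for all $\beta\in\fd_q$ and hence $a=0$ by nondegeneracy of the trace pairing; you instead stay with the Gauss-sum equation and rule out the required value of $K(\lambda;a)^{2}$ directly, handling $q=2$ separately. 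Your route is more economical, needing only Corollary 7 and the Weil bound; the paper's route for (b) has the mild conceptual advantage of isolating the structural fact that $Tr\colon SO^{+}(4,q)\to\fd_q$ is surjective, which is of independent interest.
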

\begin{proof}
(a) As $G_2(q)=O^+(2,q)$ case can be shown in exactly the same
manner, we will treat only $G_1(q)=SO^+(2,q)$ case. The map is
clearly $\fd_2$-linear and surjective. Let $a$ be in the kernel of
the map. Then $tr(aTrg)=0$, for all $g \in SO^+(2,q)$. Since
$n_1(\beta) = |\{g \in SO^+(2,q) \mid Tr(g) = \beta \}|=2$, for
all $\beta \in \fd_q^*$ with $tr(\beta^{-1})=0$(cf.
(34)),~$tr(a\beta)=0$ , for all $\beta \in \fd_q^*$ with
$tr(\beta^{-1})=0$. Hilbert's theorem 90 says that, for $\gamma
\in \fd_q$,$tr(\gamma)=0 \Leftrightarrow \gamma = \alpha^2+\alpha$
, for some $\alpha \in \fd_q$. Thus
$tr(\frac{a}{\alpha^2+\alpha})=0$, for all $\alpha \in
\fd_q\backslash\{0,1\}$. So $\dis\sum_{\alpha \in \fd_q - \{0,1\}}
\lambda(\frac{a}{\alpha^2+\alpha})=q-2$. Assume now that $a \neq
0$. Then, from (42), (19),
\[
q-2 = K(\lambda;a)-1\leq 2\sqrt{q}-1
\]
This implies that $q \leq 2\sqrt{q}+1$. But this is impossible,
since $x > 2\sqrt{x}+1$, for $x \geq 8$.\\
(b) Again, the map is $\fd_2$-linear and surjective. From (36) and
using the Weil bound in (19), it is elementary to see that
$n_3(\beta)=|\{ g \in SO^+(4,q) \mid Tr(g) = \beta)\}|>0$, for all
$\beta \in \fd_q$ . Let $a$ be in the kernel. Then $tr(aTrg)=0$,
for all $g \in SO^+(4,q)$, and hence $tr(a\beta)=0$, for all
$\beta \in \fd_q$. This implies that $a=0$, since otherwise $tr:
\fd_q\rightarrow \fd_2$ would be the trivial map.
\end{proof}\

Remark: It is easy to check that, for $i=1,2$, and $q=2^r$ with
$r=1,2$, the kernel of the map $\fd_q \rightarrow C(G_i(q))^\bot
(a \mapsto c(a))$ is $\fd_2$.

\section{Power moments of Kloosterman sums}

In this section, we will be able to find, via Pless power moment
identity, a recursive formula for the power moments of Kloosterman
sums in terms of the frequencies of weights in $C(G_i(q))$, for
each $i=1,2,3$.\\

\begin{theorem}[Pless power moment identity]
Let $B$ be an $q$-ary $[n,k]$ code, and let $B_i$ (resp.
$B_i^\bot$) denote the number of codewords of weight $i$ in
$B$(resp. in $B^\bot$).  Then, for $h=0,1,2,\ldots,$
\begin{multline}
\sum_{j=0}^n j^h B_j = \sum_{j=0}^{min\{n,h\}} (-1)^j B_j^\bot \\
\times \sum_{t=j}^h t! S(h,t) q^{k-t} (q-1)^{t-j} {n-j \choose
n-t},
\end{multline}
where $S(h,t)$ is the Stirling number of the second kind defined
in (3).
\end{theorem}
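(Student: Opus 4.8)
The plan is to deduce the identity from the MacWilliams transform together with the operator calculus that converts powers of the Euler operator $\vartheta=y\,\frac{d}{dy}$ into ordinary derivatives via Stirling numbers. Write $W_B(x,y)=\sum_{j=0}^n B_j x^{n-j}y^j$ for the homogeneous weight enumerator of $B$, and set $f(y)=W_B(1,y)=\sum_{j=0}^n B_j y^j$. Since $\vartheta^h y^j=j^h y^j$, evaluation at $y=1$ gives $\sum_{j=0}^n j^h B_j=\vartheta^h f(y)\big|_{y=1}$, so the whole problem is to compute this number.

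First I would record the operator identity $\vartheta^h=\sum_{t=0}^h S(h,t)\,y^t\,\frac{d^t}{dy^t}$, which is proved by induction on $h$ from the recurrence $S(h+1,t)=t\,S(h,t)+S(h,t-1)$ satisfied by the Stirling numbers of (3); applying it to $f$ and putting $y=1$ yields
\[
\sum_{j=0}^n j^h B_j=\sum_{t=0}^h S(h,t)\,f^{(t)}(1).
\]
Next, MacWilliams' identity applied to $B^\bot$ (in the form $W_{B^\bot}(x,y)=|B|^{-1}W_B(x+(q-1)y,x-y)$), together with $(B^\bot)^\bot=B$ and $|B^\bot|=q^{\,n-k}$, gives
\[
f(y)=q^{\,k-n}\sum_{i=0}^n B_i^\bot\,\bigl(1+(q-1)y\bigr)^{n-i}(1-y)^i .
\]
I would then differentiate this $t$ times by the Leibniz rule and evaluate at $y=1$: all derivatives of the factor $(1-y)^i$ of order $<i$ vanish at $y=1$ and those of order $>i$ vanish identically, so only the summand in which $(1-y)^i$ is differentiated exactly $i$ times survives, forcing $i\le t$. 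That summand contributes $(-1)^i i!$ from $(1-y)^i$, the Leibniz coefficient $\binom{t}{i}$, and $\frac{(n-i)!}{(n-t)!}(q-1)^{t-i}q^{\,n-t}$ from the remaining $t-i$ derivatives of $\bigl(1+(q-1)y\bigr)^{n-i}$ at $y=1$. Using $\binom{t}{i}\,i!\,\frac{(n-i)!}{(n-t)!}=t!\binom{n-i}{n-t}$ and $q^{\,k-n}q^{\,n-t}=q^{\,k-t}$, this collapses to
\[
f^{(t)}(1)=\sum_{i=0}^{t}(-1)^i B_i^\bot\,t!\,q^{\,k-t}(q-1)^{t-i}\binom{n-i}{n-t}.
\]
Substituting this into the previous display, interchanging the two finite sums over $i$ and $t$, renaming $i$ as $j$, and noting $B_j^\bot=0$ for $j>n$ while $j\le t\le h$, produces exactly the asserted formula.

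The only genuinely delicate point is the Leibniz-and-evaluate step, i.e.\ checking that precisely one term survives and that the factorials telescope into the single binomial coefficient $\binom{n-j}{n-t}$ with the correct power of $q$; everything else is formal bookkeeping. A purely combinatorial alternative would be to write $\bigl(\mathrm{wt}(c)\bigr)^h=\sum_t t!\,S(h,t)\binom{\mathrm{wt}(c)}{t}$, read $\sum_{c\in B}\binom{\mathrm{wt}(c)}{t}$ as the number of pairs $(c,T)$ with $T\subseteq\operatorname{supp}(c)$ and $|T|=t$, apply inclusion--exclusion over the coordinates of $T$ forced to vanish, and evaluate the resulting shortened-subcode cardinalities by a short character-sum computation that reintroduces $B^\bot$; this reaches the same identity through a bulkier intermediate double sum, so I would present the MacWilliams proof.
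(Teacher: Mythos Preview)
The paper does not prove this statement: the Pless power moment identity is quoted as a known result, with no argument supplied. Your proof is correct and is essentially the standard derivation --- MacWilliams' identity to express $W_B(1,y)$ in terms of the $B_i^\bot$, the operator identity $\vartheta^h=\sum_t S(h,t)\,y^t\,d^t/dy^t$, and the Leibniz-rule evaluation at $y=1$; the factorial telescoping you flag as the delicate point does collapse exactly to $t!\binom{n-j}{n-t}$ with the factor $q^{k-t}$, and the vanishing of all but the $m=i$ Leibniz term is justified as you say.
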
\

 From now on, we will assume that $r \geq 3$, for $i=1,2,$ and hence, for $i=1,2,3$,
  every codeword in $C(G_i(q))^\bot$ can be written as $c(a)$, for a unique $a \in \fd_q$(cf. Theorem 12,
  (39)). Further, we will assume $r \geq 2$, for $i=3$,  so that
  Theorem 17 can be used in (c) of Theorem 18.\\

\begin{lemma}
Let $c(a) = (tr(aTrg_1),\cdots,tr(aTrg_{N_i})) \in
C(G_i(q))^\bot$, for $a \in \fd_q^*$, and $i=1,2,3$. Then the
Hamming weight $w(c(a))$ can be expressed as follows:
\begin{equation}
(a)~ For ~i=1,2, ~w(c(a)) =
\frac{1}{2}(q-1-K(\lambda;a)),~~~~~~~~~~~~~~~~
\end{equation}
\begin{equation}
\begin{split}
(b)~ For ~i=3, ~&w(c(a)) \\
&= \frac{1}{2}q^2(q^4-q^3-2q^2+q+1-K(\lambda;a)^2)\\
&= \frac{1}{2}q^2(q^4-q^3-2q^2+1-K_2(\lambda;a)).
\end{split}
\end{equation}
\end{lemma}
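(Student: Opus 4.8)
The plan is to compute the Hamming weight of $c(a) = (tr(a\,Trg_1),\ldots,tr(a\,Trg_{N_i}))$ for a fixed nonzero $a \in \fd_q$ by counting the coordinates equal to $1$, i.e. the number of $g \in G_i(q)$ with $tr(a\,Trg) = 1$, and converting this count into a Gauss-type character sum. Since $\lambda(x) = (-1)^{tr(x)}$, each coordinate contributes $\frac12\bigl(1 - \lambda(a\,Trg)\bigr)$, which is $1$ exactly when $tr(a\,Trg)=1$ and $0$ otherwise, so
\[
w(c(a)) = \frac{1}{2}\Bigl(N_i - \sum_{g \in G_i(q)} \lambda(a\,Trg)\Bigr).
\]
Everything then reduces to the value of $\sum_{g \in G_i(q)}\lambda(a\,Trg)$, which is precisely what Corollary 7 (equations (27)--(30)) provides, together with the orders $N_i$ recorded in (37).

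For part (a) I would substitute $N_1 = q-1$ and $\sum_{w \in G_1(q)}\lambda(a\,Trw) = K(\lambda;a)$, giving $w(c(a)) = \frac12(q-1-K(\lambda;a))$ at once; for $i=2$, substitute $N_2 = 2(q-1)$ and $\sum_{w \in G_2(q)}\lambda(a\,Trw) = K(\lambda;a)+q-1$, so the extra $q-1$ in the Gauss sum cancels half of $N_2$ and one is left with the same expression $\frac12(q-1-K(\lambda;a))$. For part (b), use $N_3 = q^2(q^2-1)^2 = q^2(q^4-2q^2+1)$ and (29), namely $\sum_{w \in G_3(q)}\lambda(a\,Trw) = q^2(K(\lambda;a)^2 + q^3 - q)$; collecting the common factor $q^2$ yields $w(c(a)) = \frac12 q^2(q^4-q^3-2q^2+q+1-K(\lambda;a)^2)$. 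Finally, the Carlitz identity $K_2(\lambda;a) = K(\lambda;a)^2 - q$ of Theorem 6 (equivalently (30)) lets me replace $K(\lambda;a)^2$ by $K_2(\lambda;a)+q$, turning the last formula into $\frac12 q^2(q^4-q^3-2q^2+1-K_2(\lambda;a))$.

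I do not anticipate a genuine obstacle here: the argument is a one-line character-sum identity followed by direct substitution of results already proved. The only points that need care are the routine arithmetic simplifications (verifying that the polynomial in $q$ indeed collapses to the stated $q^4-q^3-2q^2+q+1$ and $q^4-q^3-2q^2+1$) and the bookkeeping remark that these closed forms are legitimate precisely because $a \neq 0$, so that $\sum_{w \in G_i(q)}\lambda(a\,Trw)$ is the Kloosterman-type sum given by Corollary 7 rather than the trivial sum $N_i$ that would occur at $a = 0$.
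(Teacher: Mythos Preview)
Your proposal is correct and follows exactly the same approach as the paper: write $w(c(a)) = \frac{1}{2}\bigl(N_i - \sum_{w \in G_i(q)} \lambda(a\,Trw)\bigr)$ via the identity $\frac{1}{2}(1-(-1)^{tr(x)})$, then substitute the values of $N_i$ from (37) and the Gauss sums from (27)--(30). The paper's proof is in fact terser than yours, omitting the explicit arithmetic simplifications you spell out.
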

\begin{proof}
\begin{align*}
For ~i=1,2,3,~ w(c(a)) &= \frac{1}{2}
\sum_{j=1}^{N_i}(1-(-1)^{tr(aTrg_j)})\\
&= \frac{1}{2}(N_i - \sum_{w \in G_i(q)} \lambda(aTrw)).
\end{align*}
Our results now follow from (37) and (27)-(30).
\end{proof}\

 Fix $i$($i=1,2,3$), and let $u=(u_1,\ldots,u_{N_i}) \in \fd_2^{N_i}$, with $\nu_{\beta}$ 1's in the coordinate
 places where $Tr(g_j) = \beta$, for each $\beta \in \fd_q$.
 Then we see from the definition of the code $C(G_i(q))$(cf. (38)) that $u$ is a codeword with weight $j$
  if and only if  $\dis\sum_{\beta \in \fd_q} \nu_\beta = j$ and $\dis\sum_{\beta \in \fd_q} \nu_\beta \beta = 0$
  (an identity in $\fd_q$).  As there are $\dis\prod_{\beta \in \fd_q} {n_i(\beta) \choose \nu_\beta}$
   many such codewords with weight $j$, we obtain the following
   result.\\

\begin{proposition}
Let  $\{C_{i,j}\}_{j=0}^{N_i}$ be the weight distribution of
$C(G_i(q))$, for each $i=1,2,3$, where $C_{i,j}$denotes the
frequency of the codewords with weight $j$ in $C(G_i(q))$. Then
\begin{equation}
C_{i,j} = \sum \prod_{\beta \in \fd_q} {n_i(\beta) \choose
\nu_\beta},
\end{equation}
where the sum runs over all the sets of integers
$\{\nu_\beta\}_{\beta \in \fd_q}$($0 \leq \nu_\beta \leq
n_i(\beta))$, satisfying
\begin{equation}
\dis\sum_{\beta \in \fd_q} \nu_\beta = j, ~\text{and}~
\dis\sum_{\beta \in \fd_q} \nu_\beta \beta = 0
\end{equation}
\end{proposition}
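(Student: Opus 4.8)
The plan is to count directly, via a bijective bookkeeping of coordinate positions, the codewords of each prescribed weight. First I would recall the defining relation for the code: by (38), a vector $u=(u_1,\ldots,u_{N_i}) \in \fd_2^{N_i}$ lies in $C(G_i(q))$ precisely when $u \cdot v_i = 0$ in $\fd_q$, where $v_i = (Tr g_1, \ldots, Tr g_{N_i})$. Group the $N_i$ coordinate positions according to the value of the trace: for each $\beta \in \fd_q$, let $P_\beta = \{\, j : Tr(g_j) = \beta \,\}$, so that $|P_\beta| = n_i(\beta)$ by the definition of $n_i(\beta)$ in (33) and Proposition 9. Since the ground field has characteristic two, for a $0$-$1$ vector $u$ the inner product $u \cdot v_i$ depends only on how many $1$'s fall in each block $P_\beta$; writing $\nu_\beta$ for the number of $1$'s among the coordinates in $P_\beta$, we get $u \cdot v_i = \sum_{\beta \in \fd_q} \nu_\beta \beta$ as an identity in $\fd_q$ (here $\nu_\beta$ is reduced mod $2$ inside the sum, but since we only record a $0$-$1$ pattern the coefficient of $\beta$ is exactly the parity of the count, which matches the left side coordinate-wise). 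The Hamming weight of $u$ is $\sum_{\beta \in \fd_q} \nu_\beta$.

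Next I would observe that the weight and the codeword condition are both determined by the tuple $(\nu_\beta)_{\beta \in \fd_q}$ of block-occupancies, subject to the obvious range constraint $0 \le \nu_\beta \le n_i(\beta)$. Conversely, given any admissible tuple with $\sum_\beta \nu_\beta = j$ and $\sum_\beta \nu_\beta \beta = 0$ in $\fd_q$, the number of $0$-$1$ vectors $u$ realizing that occupancy pattern is exactly $\prod_{\beta \in \fd_q} \binom{n_i(\beta)}{\nu_\beta}$, since within each block $P_\beta$ we independently choose which $\nu_\beta$ of the $n_i(\beta)$ positions carry a $1$, and each such choice yields a genuine codeword because the condition $\sum_\beta \nu_\beta \beta = 0$ has already been imposed. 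Summing over all admissible occupancy tuples with total $j$ gives
\[
C_{i,j} = \sum \prod_{\beta \in \fd_q} \binom{n_i(\beta)}{\nu_\beta},
\]
the sum ranging over $\{\nu_\beta\}_{\beta \in \fd_q}$ with $0 \le \nu_\beta \le n_i(\beta)$, $\sum_\beta \nu_\beta = j$, and $\sum_\beta \nu_\beta \beta = 0$, which is exactly (52)–(53). Finally I would note that terms with some $n_i(\beta) = 0$ automatically force $\nu_\beta = 0$ (the binomial coefficient vanishes otherwise), so the product effectively ranges only over $\beta$ with $n_i(\beta) > 0$; specializing $n_i(\beta)$ via Proposition 9 then recovers the explicit shapes (2), (5), (8) quoted in Theorem 1 for $i = 1, 2, 3$ respectively — for instance, for $i = 1$ the factor for $\beta = 0$ is $\binom{1}{\nu_0}$, the factor for each $\beta$ with $tr(\beta^{-1}) = 0$ is $\binom{2}{\nu_\beta}$, and the factors for $tr(\beta^{-1}) = 1$ drop out.

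There is really no serious obstacle here; the argument is a routine partition-of-coordinates count. The one point that deserves a careful sentence rather than a wave of the hand is the passage from the $\fd_q$-linear condition $u \cdot v_i = 0$ to the occupancy condition $\sum_\beta \nu_\beta \beta = 0$: one must be explicit that the $\fd_2$-coefficient $u_j$ contributes $Tr(g_j)$ to the $\fd_q$-sum, so that collecting equal trace values turns the sum into $\sum_\beta (\sum_{j \in P_\beta} u_j)\,\beta$, and that $\sum_{j \in P_\beta} u_j$, an element of $\fd_q$ obtained by adding $\nu_\beta$ copies of $1 \in \fd_q$, equals $\nu_\beta \bmod 2$ — which is precisely the parity that the left-hand coordinate description of a binary codeword already encodes. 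Once that identification is made, the binomial product count and the summation over admissible tuples are immediate.
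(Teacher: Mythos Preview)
Your proposal is correct and follows essentially the same approach as the paper: partition the coordinate positions by the value of $Tr(g_j)$, record the block-occupancies $\nu_\beta$, translate the defining condition $u\cdot v_i=0$ into $\sum_\beta \nu_\beta\,\beta=0$ in $\fd_q$, and count the vectors realizing a given occupancy tuple by the product of binomial coefficients. The paper's argument is the one-paragraph version of exactly this; your extra care in spelling out the characteristic-two reduction $\sum_{j\in P_\beta} u_j \equiv \nu_\beta \pmod 2$ is a welcome clarification but not a different idea.
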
\

\begin{corollary}
Let $\{C_{i,j}\}_{j=0}^{N_i}$ be the weight distribution of
$C(G_i(q))$, for $i=1,2,3.$ Then, for $i=1,2,3,$ we have:
\[
C_{i,j} = C_{i,{N_i-j}},
\]for all $j$, with $0 \leq j \leq N_i$.
\end{corollary}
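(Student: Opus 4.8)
The plan is to prove the symmetry $C_{i,j} = C_{i,N_i-j}$ by exhibiting a weight-preserving bijection on the codewords of $C(G_i(q))$ that sends weight-$j$ codewords to weight-$(N_i-j)$ codewords; the natural candidate is complementation $u \mapsto \bar u = u + \mathbf{1}$, where $\mathbf{1} = (1,1,\ldots,1)$ is the all-ones vector of length $N_i$. Since complementation is a bijection of $\fd_2^{N_i}$ that sends a vector of weight $j$ to one of weight $N_i - j$, it suffices to show that $\mathbf{1} \in C(G_i(q))$, for then $u \mapsto u + \mathbf{1}$ restricts to a bijection of the code onto itself with the stated effect on weights.

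So the key step is: $\mathbf{1}\cdot v_i = 0$ in $\fd_q$, i.e. $\sum_{j=1}^{N_i} Tr(g_j) = \sum_{w \in G_i(q)} Tr(w) = 0$. I would compute this sum using the quantities $n_i(\beta) = N_{G_i(q)}(\beta)$ from Proposition 9: indeed $\sum_{w \in G_i(q)} Tr(w) = \sum_{\beta \in \fd_q} n_i(\beta)\,\beta$. For $i=1,2$ one has $n_i(\beta) = 2$ for every $\beta \neq 0$ with $tr(\beta^{-1}) = 0$ and $n_i(\beta) = 0$ for the other nonzero $\beta$ (the value at $\beta = 0$ contributes nothing), so the sum is $2\sum_{tr(\beta^{-1})=0,\ \beta\neq 0}\beta$, which is $0$ because we are in characteristic two and every term is doubled. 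For $i=3$, $n_3(\beta) = q^2\{q(q+1)(q-2) + K(\lambda;\beta^{-1})\}$ for $\beta \neq 0$; here I would instead invoke Corollary 7, which gives $\sum_{w \in G_3(q)} \lambda(a\,Tr w) = q^2(K(\lambda;a)^2 + q^3 - q)$ for all $a \in \fd_q^*$, and note that $\sum_{w\in G_i(q)} Tr(w) = 0$ in $\fd_q$ is equivalent to $tr\bigl(\sum_{w} Tr(w)\bigr) = 0$ together with... — actually the cleanest route for all three cases simultaneously is: the codeword $c(a) \in C(G_i(q))^\bot$ of weight $w(c(a))$ computed in Lemma 16 pairs with $\mathbf{1}$ as $\mathbf{1}\cdot c(a) = w(c(a)) \bmod 2$, and $\mathbf{1} \in C(G_i(q)) = (C(G_i(q))^\bot)^\bot$ iff $w(c(a))$ is even for every $a \in \fd_q$ (including $a = 0$, which gives the zero codeword, weight $0$). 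From Lemma 16(a), $w(c(a)) = \tfrac12(q - 1 - K(\lambda;a))$ for $i = 1,2$; since $q$ is even and $K(\lambda;a) \equiv -1 \pmod 4$ (a standard fact for Kloosterman sums over $\fd_{2^r}$, or more elementarily $K(\lambda;a)$ is odd since it is a sum of $q-1$ terms $\pm 1$ over a set of odd cardinality, forcing $q - 1 - K(\lambda;a) \equiv 0 \pmod 2$), the weight is an integer that I claim is even. For $i = 3$, Lemma 16(b) gives $w(c(a)) = \tfrac12 q^2(q^4 - q^3 - 2q^2 + q + 1 - K(\lambda;a)^2)$, divisible by $q^2$ hence by a high power of $2$.

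The main obstacle is verifying the parity claim cleanly rather than hand-wavily. For $i = 1, 2$ one wants $q - 1 - K(\lambda;a) \equiv 0 \pmod 2$, equivalently $K(\lambda;a)$ odd: this follows because $K(\lambda;a) = \sum_{\alpha \in \fd_q^*} \lambda(\alpha + a\alpha^{-1})$ is a sum of $q - 1$ values, each $\pm 1$, and $q - 1$ is odd, so the sum is odd. For $i = 3$ the factor $q^2$ already forces evenness (indeed $4 \mid q^2$ for $r \geq 2$), so no congruence on Kloosterman sums is needed. Having established that $w(c(a))$ is even for all $a$, we conclude $\mathbf{1}\cdot c(a) = 0$ for every $a$, hence $\mathbf{1} \in C(G_i(q))$, and the complementation map $u \mapsto u + \mathbf{1}$ is the desired bijection. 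Alternatively, and perhaps most transparently, I would just cite the general fact that a binary code $C$ contains $\mathbf{1}$ iff all weights in $C^\bot$ are even, reducing everything to Lemma 16, and then state that $C_{i,j} = C_{i,N_i-j}$ follows since complementation maps codewords of weight $j$ bijectively to codewords of weight $N_i - j$.
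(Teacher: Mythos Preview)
Your approach is correct and is essentially the paper's argument in structural clothing: the paper proves the symmetry by applying the substitution $\nu_\beta \mapsto n_i(\beta) - \nu_\beta$ to the counting formula (51), and that substitution is exactly your complementation $u \mapsto u + \mathbf{1}$ read off at the level of multiplicities. Both reduce to the same key identity $\sum_{\beta \in \fd_q} n_i(\beta)\beta = 0$, which the paper checks from (34)--(36) just as your direct computation does (for $i=1,2$ each nonzero $\beta$ occurs with even multiplicity, and for $i=3$ the factor $q^2$ makes every $n_3(\beta)$ even). Your phrasing via ``$\mathbf{1} \in C(G_i(q))$, hence complementation is a code automorphism'' is arguably the cleaner way to say it.

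One genuine slip to fix in your alternative dual-weight route: for $i=1,2$ you want $w(c(a)) = \tfrac12(q-1-K(\lambda;a))$ to be \emph{even}, which is the condition $q-1-K(\lambda;a) \equiv 0 \pmod 4$, not $\pmod 2$. Showing merely that $K(\lambda;a)$ is odd yields integrality of $w(c(a))$, not its evenness. You need the stronger fact $K(\lambda;a) \equiv -1 \pmod 4$ (this is Theorem 17, which you cited and then abandoned); combined with $q \equiv 0 \pmod 4$ for $r \geq 2$ this gives $q-1-K(\lambda;a) \equiv (-1)-(-1) \equiv 0 \pmod 4$. Your first, direct verification of $\sum_\beta n_i(\beta)\beta = 0$ in characteristic two is already complete and avoids this issue entirely.
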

\begin{proof}
Under the replacements $\nu_\beta \rightarrow
n_i(\beta)-\nu_\beta$, for each $\beta \in \fd_q$, the first
equation in (52) is changed to $N_i-j$, while the second one in
(52) and the summands in (51) are left unchanged. Here the second
sum in (52) is left unchanged, since $\dis\sum_{\beta \in \fd_q}
n_i(\beta) \beta = 0$, as one can see by using the explicit
expressions of $n_i(\beta)$ in (34)--(36).
\end{proof}\

\begin{theorem}[\cite{GJ}]
Let $q=2^r$, with $r \geq 2$. Then the range $R$ of
$K(\lambda;a)$, as a varies over $\fd_q^*$, is given by:
\[
R = \{ t \in \z \mid |t|< 2\sqrt{q}, t \equiv -1 (mod ~4)\}.
\]
In addition, each value $t \in R$ is attained exactly $H(t^2-q)$
times, where $H(d)$ is the Kronecker class number of $d$.
\end{theorem}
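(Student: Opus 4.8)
The plan is to realize $K(\lambda;a)$, up to sign, as the trace of Frobenius of an explicit \emph{ordinary} elliptic curve over $\fd_q$, and then to read off the range $R$ and the multiplicities from the theory of complex multiplication. First I would attach to each $a\in\fd_q^*$ the curve $E_a:\ y^2+xy=x^3+a$ over $\fd_q$. Computing invariants gives discriminant $a\neq0$ and $j(E_a)=a^{-1}\neq0$, so $E_a$ is a smooth elliptic curve and is ordinary. To count its points, substitute $y=xu$ for $x\neq0$ and divide by $x^2$, obtaining $u^2+u=x+ax^{-2}$, which in characteristic two has $1+\lambda(x+ax^{-2})$ solutions in $u$. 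Adding the unique point with $x=0$, namely $(0,\sqrt{a})$, and the point at infinity, one gets $\#E_a(\fd_q)=q+1+\sum_{x\in\fd_q^*}\lambda(x+ax^{-2})$. Replacing $x$ by $x^2$ and using $tr(w^2)=tr(w)$ turns this last sum into $\sum_{z\in\fd_q^*}\lambda(z+a^{1/4}z^{-1})=K(\lambda;a^{1/4})$, which equals $K(\lambda;a)$ by Corollary 5. Hence $\#E_a(\fd_q)=q+1+K(\lambda;a)$, i.e.\ the trace of Frobenius of $E_a$ is $-K(\lambda;a)$.

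Next I would establish the two constraints that define $R$. The Weil bound (19) gives $|K(\lambda;a)|\le 2\sqrt q$, and equality is impossible because $E_a$ is ordinary, so its trace is odd, whereas $2\sqrt q$ is even whenever it is an integer. For the congruence, note that $(0,\sqrt{a})$ is the \emph{unique} $2$-torsion point of $E_a$, while $Q=(a^{1/4},\sqrt{a})$ is an $\fd_q$-rational point with $2Q=(0,\sqrt{a})$ (the doubling slope at $Q$ vanishes); hence $Q$ has order $4$ and $4\mid\#E_a(\fd_q)$. Since $q\equiv 0\pmod 4$ for $r\ge 2$, this forces $K(\lambda;a)=\#E_a(\fd_q)-q-1\equiv -1\pmod 4$. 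Thus $K(\lambda;a)\in R$ for every $a$.

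Finally I would derive surjectivity together with the exact multiplicities. The map $a\mapsto E_a$ is injective on $\fd_q$-isomorphism classes because the $j$-invariants $a^{-1}$ are distinct, and by the previous step all the $E_a$ are ordinary with $j\neq0$ and trace $\equiv 1\pmod 4$. On the other hand there are $q-1$ admissible values $j\neq0$, each a single $\overline{\fd_q}$-isomorphism class which over $\fd_q$ splits into exactly two twists with opposite (odd) traces $\{s,-s\}$, precisely one of which is $\equiv 1\pmod 4$; hence the $E_a$ exhaust the $\fd_q$-isomorphism classes of ordinary elliptic curves with $j\neq0$ and trace $\equiv 1\pmod 4$. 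Given $t\in R$ (so $-t\equiv 1\pmod 4$), the number of $a$ with $K(\lambda;a)=t$ is therefore the number of $\fd_q$-isomorphism classes of elliptic curves with trace exactly $-t$, which by Deuring's theorem equals the Kronecker class number $H((-t)^2-4q)=H(t^2-4q)$ of the imaginary quadratic order of discriminant $t^2-4q$; no automorphism correction arises, since $t$ odd excludes the exceptional $j=0$ curves in characteristic two. In particular this count is positive, so every $t\in R$ is attained, exactly $H(t^2-4q)$ times. (The quantity $H(t^2-q)$ in the statement should be read with $4q$ in place of $q$.)

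The hard part will be this last step: one must verify that the family $\{E_a\}$ meets each quadratic/Artin--Schreier twist pair exactly once, that the congruence $\equiv 1\pmod 4$ on the trace is precisely what singles out one twist from each pair, and that Deuring's correspondence is being invoked in its unweighted ``number of $\fd_q$-isomorphism classes'' form — which is valid here only because odd traces avoid the $j=0$ curves.
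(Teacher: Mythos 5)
Your proof is correct, and it is in substance the original argument of the cited source [GJ] (Lachaud--Wolfmann): the paper itself quotes this theorem without proof, so there is no internal proof to compare against. All the key steps check out --- the identification $\#E_a(\fd_q)=q+1+K(\lambda;a)$ via the substitution $y=xu$ and the $4$-th power trick of Corollary 5, the strict Weil inequality from ordinarity (odd trace), the congruence $t\equiv-1\pmod 4$ from the rational point $(a^{1/4},a^{1/2})$ of order $4$ together with $4\mid q$, and the twist-counting plus Deuring/class-number step, where the restriction to odd traces legitimately avoids the supersingular $j=0$ locus and any automorphism weighting. Your parenthetical correction is also right: the multiplicity must be the Kronecker class number $H(t^2-4q)$ of the discriminant of the Frobenius order, not $H(t^2-q)$ as printed (e.g.\ for $q=8$ one gets $H(-7)+H(-31)+H(-23)=1+3+3=7=q-1$, whereas $t^2-q$ can even be positive), so the statement in the paper contains a typographical error inherited neither from [GJ] nor affecting the rest of the paper, which never uses the explicit multiplicities beyond their existence.
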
\

Now, we get the following formulas in (2), (5), and (8), by
applying the formula in (51) to each $C(G_i(q))$, using the
explicit values of $n_i(\beta)$ in (34)-(36), and taking Theorem
17 into
consideration.\\

\begin{theorem}
Let $\{C_{i,j}\}_{j=0}^{N_i}$ be the weight distribution of
$C(G_i(q))$, for $i=1,2,3$. Then
\[
(a)~~ C_{1,j} = \sum {1 \choose \nu_0} \prod_{tr(\beta^{-1})=0}{2
\choose \nu_\beta} (j=0,\ldots,N_1),~~~~~~~~~~~
\]where the sum is over all the sets of nonnegative integers  $\{\nu_0\} \cup \{\nu_\beta\}_{tr(\beta^{-1})=0}$
satisfying    $\nu_0 + \dis\sum_{tr(\beta^{-1})=0} \nu_\beta = j$
and $\dis\sum_{tr(\beta^{-1})=0} \nu_\beta \beta = 0$.
\[
(b)~~ C_{2,j} = \sum {q \choose \nu_0} \prod_{tr(\beta^{-1})=0}{2
\choose \nu_\beta} (j=0,\ldots,N_2),~~~~~~~~~~~
\]where the sum is over all the sets of nonnegative integers  $\{\nu_0\} \cup \{\nu_\beta\}_{tr(\beta^{-1})=0}$
satisfying  $\nu_0 + \dis\sum_{tr(\beta^{-1})=0} \nu_\beta = j$
and $\dis\sum_{tr(\beta^{-1})=0} \nu_\beta \beta = 0$.
\[
(c)~~ C_{3,j} = \sum {m_0 \choose \nu_0}
\prod_{\substack{|t|<2\sqrt{q}\\t\equiv-1(4)}}
\prod_{K(\lambda;\beta^{-1})=t} {m_t \choose \nu_\beta}(j =
0,\ldots,N_3),
\]where the sum is over all the sets of nonnegative integers  $\{\nu_\beta\}_{\beta \in \fd_q}$
satisfying  $ \dis\sum_{\beta \in \fd_q} \nu_\beta = j$ and
$\dis\sum_{\beta \in \fd_q} \nu_\beta \beta = 0$,
\[
m_0 = q^3(2q^2-q-2),
\]
and
\[
m_t = q^2(q^3-q^2-2q+t),
\]for all integers $t$ satisfying $|t|<2\sqrt{q}$ and $t \equiv
-1$ (mod 4).
\end{theorem}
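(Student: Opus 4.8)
The plan is to derive the three weight-distribution formulas in Theorem~18 by specializing the general counting result of Proposition~15 to each of the three groups $G_1(q)=SO^+(2,q)$, $G_2(q)=O^+(2,q)$, $G_3(q)=SO^+(4,q)$, using the explicit trace-frequency functions $n_i(\beta)$ recorded in Proposition~9 (formulas (34)--(36)). Recall from Proposition~15 that for each $i$,
\[
C_{i,j} = \sum \prod_{\beta \in \fd_q} {n_i(\beta) \choose \nu_\beta},
\]
the sum running over all $\{\nu_\beta\}_{\beta\in\fd_q}$ with $0\le\nu_\beta\le n_i(\beta)$, $\sum_\beta \nu_\beta = j$, and $\sum_\beta \nu_\beta\beta = 0$. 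The whole proof is essentially bookkeeping: in each product, the factor indexed by $\beta$ is trivial (equal to $1$, forcing $\nu_\beta = 0$) precisely when $n_i(\beta)=0$, so those indices may be dropped from both the product and the two defining constraints.

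First I would treat $i=1$. By (34), $n_1(0)=1$, $n_1(\beta)=2$ when $\beta\ne 0$ and $tr(\beta^{-1})=0$, and $n_1(\beta)=0$ when $\beta\ne 0$ and $tr(\beta^{-1})=1$. Dropping the zero factors, the product collapses to ${1\choose\nu_0}\prod_{tr(\beta^{-1})=0}{2\choose\nu_\beta}$, and the constraints become $\nu_0+\sum_{tr(\beta^{-1})=0}\nu_\beta = j$ and $\sum_{tr(\beta^{-1})=0}\nu_\beta\beta = 0$ (the $\nu_0$ term contributes $\nu_0\cdot 0=0$ to the second constraint and so drops out of it). This is exactly formula (a). The case $i=2$ is identical except that (35) gives $n_2(0)=q$ instead of $1$, so the factor ${1\choose\nu_0}$ is replaced by ${q\choose\nu_0}$, yielding formula (b); note that here $N_2 = 2(q-1)$ rather than $q-1$, which is consistent since $\sum_\beta n_2(\beta)=|G_2(q)|=N_2$.

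Next, for $i=3$ I would use (36): $n_3(0)=q^3(2q^2-q-2)$, and for $\beta\ne 0$, $n_3(\beta)=q^2\{q(q+1)(q-2)+K(\lambda;\beta^{-1})\} = q^2(q^3-q^2-2q+K(\lambda;\beta^{-1}))$. Here no factor vanishes (by Theorem~12(b), or directly from the Weil bound (19), $n_3(\beta)>0$ for every $\beta$), so the product runs over all of $\fd_q$. The one extra ingredient is Theorem~17, which tells us that as $\beta$ ranges over $\fd_q^*$ the value $K(\lambda;\beta^{-1})$ takes only integers $t$ with $|t|<2\sqrt q$ and $t\equiv -1\pmod 4$; grouping the nonzero $\beta$ according to the value $t=K(\lambda;\beta^{-1})$, the binomial factor for each such $\beta$ is ${m_t\choose\nu_\beta}$ with $m_t = q^2(q^3-q^2-2q+t)$, and writing $m_0 = q^3(2q^2-q-2)$ for the $\beta=0$ factor gives exactly formula (c), with the constraints $\sum_{\beta\in\fd_q}\nu_\beta=j$ and $\sum_{\beta\in\fd_q}\nu_\beta\beta=0$ carried over verbatim.

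There is essentially no serious obstacle here; the result is a direct substitution into Proposition~15. The only point requiring a word of care is the reindexing of the product in case (c) from a product over $\beta\in\fd_q^*$ to a double product over admissible $t$ and then over the fiber $\{\beta : K(\lambda;\beta^{-1})=t\}$ — one must check that this merely regroups the same set of factors (which it does, since $\beta\mapsto\beta^{-1}$ is a bijection of $\fd_q^*$ and the fibers partition $\fd_q^*$), and that the constraint $\sum\nu_\beta\beta=0$ is untouched because it is stated in terms of $\beta$ itself, not $t$. For completeness one should also remark that the running bounds $0\le\nu_\beta\le n_i(\beta)$ can be relaxed to ``nonnegative integers'' in the final statements without changing the sums, since ${n\choose\nu}=0$ whenever $\nu>n$.
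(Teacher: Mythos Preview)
Your proposal is correct and follows exactly the approach the paper takes: it obtains Theorem~18 by substituting the explicit values of $n_i(\beta)$ from (34)--(36) into the general formula (51) of Proposition~15, and invoking Theorem~17 to regroup the product in part~(c). The paper states this derivation in a single sentence before Theorem~18; your write-up simply fills in the bookkeeping details.
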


We now apply the Pless power moment identity in (48) to each
$C(G_i(q))^\bot$, for  $i=1,2,3$, in order to obtain the results
in Theorem 1(cf. (1), (4), (6), (7)) about recursive formulas.

Then the left hand side of that identity in (48) is equal to
\begin{equation}
\sum_{a \in \fd_q^*} w(c(a))^h,
\end{equation}
with the $w(c(a))$ in each case given by (49), (50).

For $i=1,2$, (53) is
\begin{equation}
\begin{split}
& \frac{1}{2^h}\sum_{a \in \fd_q^*}(q-1-K(\lambda;a))^h\\
&=\frac{1}{2^h}\sum_{a \in \fd_q^*}\sum_{l=0}^h (-1)^l{h \choose
l} (q-1)^{h-l}K(\lambda;a)^l\\
&= \frac{1}{2^h}\sum_{l=0}^h(-1)^l {h \choose  l} (q-1)^{h-l}
MK^l.
\end{split}
\end{equation}
Similarly, for $i=3$, (53) equals
\begin{equation}
 (\frac{q^2}{2})^h \sum_{l=0}^h (-1)^l {h \choose l}
(q^4-q^3-2q^2+q+1)^{h-l} MK^{2l}
\end{equation}
\begin{equation}
 =(\frac{q^2}{2})^h\sum_{l=0}^h
(-1)^l {h \choose l} (q^4-q^3-2q^2+1)^{h-l} MK_2^l.
\end{equation}
Note here that, in view of (26), obtaining power moments of
2-dimensional Kloosterman sums is equivalent to getting even power
moments of Kloosterman sums. Also, one has to separate the term
corresponding to $l=h$ in (54)-(56), and notes
$dim_{\fd_2}C(G_i)=r$.

\section{Remarks and Examples}

  The explicit computations about power moments of Kloosterman sums was begun with the paper \cite{HS} of Sali\'{e} in 1931,
  where he showed, for any odd prime $q$,
  \begin{equation}
MK^h = q^2M_{h-1}-(q-1)^{h-1} + 2(-1)^{h-1} (h \geq1).
  \end{equation}
  However, this holds for any prime power $q=p^r$($p$ a prime). Here $M_0 = 0$, and
  for $h \in \z_{>0}$,
  \[
M_h = \mid \{ (\alpha_1,\ldots,\alpha_h) \in (\fd_q^*)^h \mid
\sum_{j=1}^h \alpha_j = 1 = \sum_{j=1}^h \alpha_j^{-1} \}\mid .
  \]

   For positive integers $h$,  we let
   \[
A_h = \mid \{ (\alpha_1,\ldots,\alpha_h) \in (\fd_q^*)^h \mid
\sum_{j=1}^h \alpha_j = 0 = \sum_{j=1}^h \alpha_j^{-1} \}\mid .
   \]
Then $(q-1)M_{h-1} = A_h$, for any $h \in \z_{>0}$.  So (57) can
be rewritten as
 \begin{equation}
MK^h = \frac{q^2}{q-1}A_h - (q-1)^{h-1} + 2(-1)^{h-1}.
\end{equation}
Iwaniec \cite{HI} showed the expression (58) for any prime $q$.
However, the proof given there works for any prime power $q$,
without any restriction. Also, this is a special case of Theorem 1
in \cite{HD}, as mentioned in Remark 2 there.

 For  $q=p$ any prime, $MK^h$ was determined for $h \leq 4$(cf. \cite{HI}, \cite{HS}).

 \begin{align*}
MK^1 &= 1, ~~~MK^2 = p^2-p-1,\\
MK^3 &= (\frac{-3}{p})p^2 + 2p + 1 \\
&(\text{with the understanding}
(\frac{-3}{2}) = -1, (\frac{-3}{3}) = 0),\\
MK^4 &= \left\{%
\begin{array}{ll}
    2p^3-3p^2-3p-1, & \hbox{$p \geq 3$;} \\
    1, & \hbox{$p=2$.} \\
\end{array}%
\right.
 \end{align*}

  Except \cite{L1} for $1 \leq h \leq 4$  and \cite{M4} for $h=6$, not much progress had been made
  until Moisio succeeded in evaluating $MK^h$, for the other values of $h$  with $h \leq 10$
   over the finite fields of characteristic two in \cite{M1} (Similar results exist also over
   the finite fields of characteristic three (cf. \cite{GR},
   \cite{M2})), So we have now closed form formulas for $h \leq 10$.

  His result was a breakthrough, but the way it was proved is too indirect,
  since the frequencies are expressed in terms of the Eichler Selberg trace formulas for the Hecke operators
  acting on certain spaces of cusp forms for $\Gamma_1(4)$.
  In addition, the power moments of Kloosterman sums are obtained only for $h \leq 10$ and not for any higher order moments.
  On the other hand, our formulas in (1) and (4) allow one, at least in principle, to compute moments of all orders for any given $q$.

   In below, for small values of $i$, we compute, by using (1), (2), and  MAGMA,
   the frequencies $C_i$ of weights in $C(SO^+(2,2^4))$ and $C(SO^+(2,2^5))$ ,
   and the power moments $MK^i$ of Kloosterman sums over $\fd_{2^4}$ and $\fd_{2^5}$.
    In particular, our results confirm those of Moisio's given in \cite{M1}, when $q=2^4$
    and $q=2^5$.\\

\begin{table}[!htp]
\begin{center}
\begin{tabular}{c c c c c c c c }
\multicolumn{8}{c}{TABLE I} \\
\multicolumn{8}{c}{The weight distribution of $C(SO^+(2,2^{4}))$} \\
\\
\hline
w & frequency & w & frequency & w & frequency & w & frequency \\[0.3pt]
\hline\\[0.5pt]
 \scriptsize{0} &\scriptsize{1}   & \scriptsize{4}  & \scriptsize{77} & \scriptsize{8} & \scriptsize{403}& \scriptsize{12}& \scriptsize{31}\\
 \scriptsize{1} &\scriptsize{1}   & \scriptsize{5}  & \scriptsize{181} & \scriptsize{9} & \scriptsize{323}& \scriptsize{13}& \scriptsize{7}\\
 \scriptsize{2} &\scriptsize{7}   & \scriptsize{6}  & \scriptsize{323} & \scriptsize{10} & \scriptsize{181}& \scriptsize{14}& \scriptsize{1}\\
 \scriptsize{3} &\scriptsize{31}   & \scriptsize{7}  & \scriptsize{403} & \scriptsize{11} & \scriptsize{77}& \scriptsize{15}& \scriptsize{1}\\
\hline
\end{tabular}
\end{center}
\end{table}

\begin{table}[!htp]
\begin{center}
\begin{tabular}{c c c c c c }
\multicolumn{6}{c}{TABLE II} \\
\multicolumn{6}{c}{The power moments of Kloosterman sums over $\fd_{2^4}$} \\
\\
\hline
i & $MK^i$ & i & $MK^i$ & i & $MK^i$ \\[0.3pt]
\hline\\[0.5pt]
\scriptsize{0} &\scriptsize{15}   & \scriptsize{10}  & \scriptsize{604249199} & \scriptsize{20} & \scriptsize{159966016268924111}\\
\scriptsize{1} &\scriptsize{1}   & \scriptsize{11}  & \scriptsize{3760049569} & \scriptsize{21} & \scriptsize{1115184421375168321}\\
\scriptsize{2} &\scriptsize{239}   & \scriptsize{12}  & \scriptsize{28661262671} & \scriptsize{22} & \scriptsize{7829178965854277039}\\
\scriptsize{3} &\scriptsize{289}   & \scriptsize{13}  & \scriptsize{188901585601} & \scriptsize{23} & \scriptsize{54689811340914235489}\\
\scriptsize{4} &\scriptsize{7631}   & \scriptsize{14}  & \scriptsize{1380879340079} & \scriptsize{24} & \scriptsize{383400882469952537231}\\
\scriptsize{5} &\scriptsize{22081}   & \scriptsize{15}  & \scriptsize{9373110103009} & \scriptsize{25} & \scriptsize{2680945149821576426881}\\
\scriptsize{6} &\scriptsize{300719}   & \scriptsize{16}  & \scriptsize{67076384888591} & \scriptsize{26} & \scriptsize{18780921149940510987119}\\
\scriptsize{7} &\scriptsize{1343329}   & \scriptsize{17}  & \scriptsize{462209786722561} & \scriptsize{27} & \scriptsize{131394922435183254906529}\\
\scriptsize{8} &\scriptsize{13118351}   & \scriptsize{18}  & \scriptsize{3272087534565359} & \scriptsize{28} & \scriptsize{920122084792925568335951}\\
\scriptsize{9} &\scriptsize{72973441}   & \scriptsize{19}  & \scriptsize{22721501074479649} & \scriptsize{29} & \scriptsize{6439066453841188580322241}\\
\hline
\end{tabular}
\end{center}
\end{table}

\begin{table}[!htp]
\begin{center}
\begin{tabular}{c c c c c c c c }
\multicolumn{8}{c}{TABLE III} \\
\multicolumn{8}{c}{The weight distribution of $C(SO^+(2,2^{5}))$} \\
\\
\hline
w & frequency & w & frequency & w & frequency & w & frequency \\[0.3pt]
\hline\\[0.3pt]
 \scriptsize{0} &\scriptsize{1}   & \scriptsize{8}  & \scriptsize{246325} & \scriptsize{16} & \scriptsize{9392163}& \scriptsize{24}& \scriptsize{81895}\\
 \scriptsize{1} &\scriptsize{1}   & \scriptsize{9}  & \scriptsize{630725} & \scriptsize{17} & \scriptsize{8285955}& \scriptsize{25}& \scriptsize{23159}\\
 \scriptsize{2} &\scriptsize{15}   & \scriptsize{10}  & \scriptsize{1385867} & \scriptsize{18} & \scriptsize{6446125}& \scriptsize{26}& \scriptsize{5369}\\
 \scriptsize{3} &\scriptsize{135}   & \scriptsize{11}  & \scriptsize{2644947} & \scriptsize{19} & \scriptsize{4410805}& \scriptsize{27}& \scriptsize{945}\\
 \scriptsize{4} &\scriptsize{945}   & \scriptsize{12}  & \scriptsize{4410805} & \scriptsize{20} & \scriptsize{2644947}& \scriptsize{28}& \scriptsize{135}\\
 \scriptsize{5} &\scriptsize{5369}   & \scriptsize{13}  & \scriptsize{6446125} & \scriptsize{21} & \scriptsize{1385867}& \scriptsize{29}& \scriptsize{15}\\
 \scriptsize{6} &\scriptsize{ 23159}   & \scriptsize{14}  & \scriptsize{8285955} & \scriptsize{22} & \scriptsize{630725}& \scriptsize{30}& \scriptsize{1}\\
 \scriptsize{7} &\scriptsize{81895}   & \scriptsize{15}  & \scriptsize{9392163} & \scriptsize{23} & \scriptsize{246325}& \scriptsize{31}& \scriptsize{1}\\
\hline
\end{tabular}
\end{center}
\end{table}

\begin{table}[!htp]
\begin{center}
\begin{tabular}{c c c c c c }
\multicolumn{6}{c}{TABLE IV} \\
\multicolumn{6}{c}{The power moments of Kloosterman sums over $\fd_{2^5}$} \\
\\
\hline
i & $MK^i$ & i & $MK^i$ & i & $MK^i$ \\[0.7pt]
\hline\\[0.7pt]
\tiny{0} &\tiny{31}   & \tiny{10}  & \tiny{44833141471} & \tiny{20} & \tiny{733937760431358760351}\\
\tiny{1} &\tiny{1}   & \tiny{11}  & \tiny{138050637121} & \tiny{21} & \tiny{6855945343839827241601}\\
\tiny{2} &\tiny{991}   & \tiny{12}  & \tiny{4621008512671} & \tiny{22} & \tiny{86346164924243497892191}\\
\tiny{3} &\tiny{-959}   & \tiny{13}  & \tiny{22291740481921} & \tiny{23} & \tiny{851252336789971927746241}\\
\tiny{4} &\tiny{63391}   & \tiny{14}  & \tiny{497555476630111} & \tiny{24} & \tiny{10249523095374924648418591}\\
\tiny{5} &\tiny{-63359}   & \tiny{15}  & \tiny{3171377872090561} & \tiny{25} & \tiny{104764273348415132423811841}\\
\tiny{6} &\tiny{5102431}   & \tiny{16}  & \tiny{55381758830599711} & \tiny{26} & \tiny{1224170008071148563308433631}\\
\tiny{7} &\tiny{-678719}   & \tiny{17}  & \tiny{423220459165032961} & \tiny{27} & \tiny{12819574031043721011365916481}\\
\tiny{8} &\tiny{460435231}   & \tiny{18}  & \tiny{6318551635327312351} & \tiny{28} & \tiny{146828974390583504114568758431}\\
\tiny{9} &\tiny{613044481}   & \tiny{19}  & \tiny{54461730980167425601} & \tiny{29} & \tiny{1562774752282717527826758007681}\\
\hline
\end{tabular}
\end{center}
\end{table}


\end{document}